\newtheorem{theorem}{Theorem}[section]
\newtheorem{lemma}[theorem]{Lemma}
\newtheorem{proposition}[theorem]{Proposition}
\newtheorem{corollary}[theorem]{Corollary}
\theoremstyle{definition}
\theoremstyle{remark}
\newtheorem{remark}[theorem]{Remark}
\numberwithin{equation}{section}
\DeclareMathOperator{\Hom}{Hom}
\DeclareMathOperator{\Ind}{Ind}
\font \normalfrak eufm10
\def\fr#1{\hbox{\normalfrak #1}}
\def\sh{{\varPhi}}
\def\r{\text{reg}}
\def\tr{{\theta\text{-reg}}}
\def\tu{{\text{tu}}}
\def\G{{\bf G}}
\def\H{{\bf H}}
\def\T{{\bf T}}
\def\Q{{\mathbb Q}}
\def\wtp{{\widetilde\pi}}
\def\wtV{{\widetilde V}}
\def\wtw{{\tilde w}}
\def\wtv{{\tilde v}}
\def\wtW{{\widetilde W}}
\def\wtk{{\widetilde \kappa}}
\def\F{{\fr f}}
\def\U{{\mathcal U}}
\begin{document}

\title{Spherical characters: the supercuspidal case}

\author{Fiona Murnaghan}
\address{Department of Mathematics, University of Toronto, Toronto, 
Canada M5S 2E4}
\email{fiona@math.toronto.edu}
\thanks{The author's research is supported by an NSERC Discovery Grant}

\subjclass[2000]{22E50, 20G05, 20G25}
\date{March 30, 2007}

\dedicatory{This paper is dedicated to the memory of George Mackey.}

\keywords{Admissible representation, distinguished representation,
character, spherical character, reductive p-adic group.}

\begin{abstract}
We exhibit a basis for the space of spherical
characters of a distinguished supercuspidal representation
$\pi$
of a connected reductive $p$-adic group,
subject to the assumption that $\pi$ is obtained
via induction from a representation of an open 
compact mod centre subgroup.
We derive an integral formula for each spherical
character belonging to the
the basis. This formula involves integration of
a particular kind of matrix coefficient of
$\pi$. We also obtain a similar formula for the
function realizing the spherical character.
In addition, 
we determine, subject to some conditions, which of these spherical
characters vanish identically on an open neighbourhood
of the identity. We verify that the requisite conditions
are always satisfied for distinguished tame supercuspidal 
representations
of groups that split over tamely ramified extensions.
\end{abstract}

\maketitle

\section{Introduction} The reader may refer to later sections
for specific information regarding definitions and notation.
Let $G$ be a connected reductive $p$-adic group. 

Let $\theta$ be an involution of $G$ and let
$H$ be a reductive $p$-adic group that is a subgroup
of the group of fixed points $G^\theta$  of $\theta$
 and contains
the identity component of $G^\theta$. Then $G/H$
is a reductive $p$-adic symmetric space.
Harmonic analysis on reductive $p$-adic symmetric spaces
involves the study of $H$-biinvariant
distributions on $G$.
Unlike harmonic analysis on real and complex reductive
symmetric spaces, which has been studied extensively,
there are many open questions in this area. 

Many results in harmonic analysis on $G$ have
some kind of analogue in harmonic analysis
on $G/H$. 
In \cite{RR}, Rader and Rallis carried over to reductive
$p$-adic symmetric spaces some results of Howe
and many results of Harish-Chandra (mainly from
the Queen's lectures (\cite{HC2})). 
The $H$-biinvariant distributions that play a role analogous to
that played in harmonic analysis on $G$ by
characters of irreducible admissible representations 
are the spherical characters of class one irreducible
admissible representations. (See
 Section~\ref{sec:sphchdef} for the definition of
spherical character.)

In this paper we study the basic properties of
spherical characters of class one irreducible
supercuspidal representations. Let $\pi$ be such
a representation.
In Theorem~\ref{spch}, we attach an 
$H$-biinvariant distribution $D_\varphi$ to each matrix 
coefficient $\varphi$ of $\pi$. 
If $\varphi$ does not 
have certain invariance properties,
then $D_\varphi=0$. 
Each $D_\varphi$ is
a spherical character of $\pi$.
Using results from \cite{RR} (as stated in Theorem~\ref{RRF}
of this paper), we obtain an integral formula for
each spherical character $D_\varphi$ as a function on
the $\theta$-regular set.
Our integral formulas for $D_\varphi$, as
a distribution and as a function on the $\theta$-regular
set, are analogues of results of Harish-Chandra 
that express
the ordinary character of an irreducible
supercuspidal representation in
terms of integration of a matrix coefficient
of the representation (see Theorem~\ref{scthm}).

Harish-Chandra's integral formulas for the ordinary 
characters of irreducible supercuspidal represntations 
have proved
useful both in computing character values and in
deriving qualitative properties of characters.
Our hope is that the formulas for spherical
characters that we obtain in this paper will
prove useful in a similar way. In related work,
currently in progress, these formulas are being used
in the study of germ expansions of spherical characters
of distinguished supercuspidal representations.

Assume that $\pi$ is a class one irreducible
supercuspidal representation that can
be realized as an induced representation (as
discussed in Section~\ref{sec:scind}).
A lemma of Hakim and Mao (\cite{HMa}) describing
the space of $H$-invariant linear functionals
on the space of $\pi$ is
used to prove that the space of spherical
characters of $\pi$ is spanned by the
distributions $D_\varphi$ as $\varphi$
ranges over matrix coefficients of $\pi$.
We exhibit a set of matrix coefficients
that give rise to  a set of linearly independent
spherical characters, thus obtaining
a basis for the space of spherical characters
of $\pi$ (see Theorem~\ref{dimen}).
We remark that certain of the examples discussed
in \cite{HM} are examples
of supercuspidal representations for which
the space of spherical characters has dimension
greater than one (see Remark~\ref{dimg}).

In Section~\ref{sec:vanish}, subject to some
conditions,
we determine which of
the spherical characters of $\pi$ vanish
identically on the intersection of an open 
neighbourhood of the identity with the
$\theta$-regular set. In particular,
if the dimension of $H$-invariant linear functionals
on the space of $\pi$ has dimension greater
than one, there exist spherical characters
of $\pi$ that exhibit this vanishing property.
Note that this contrasts with the behaviour of
ordinary characters of irreducible admissible
representations, which never vanish identically
on
any open neighbourhood of the identity.
At the end of the section,
we state results from \cite{HM} which
imply  that the conditions needed for
the vanishing results 
always hold 
for distinguished tame supercuspidal representations
of groups that split over tamely ramified extensions
of $F$.

\section{General notation and definitions} Let $G$ be
a reductive $p$-adic group. That is, $G$
is the group of $F$-rational points $\G(F)$
of a reductive algebraic $F$-group $\G$,
where $F$ is a nonarchimedean local field. 
Because some of the results that we use in this paper
have not been proved
in the positive characteristic setting, we will assume
that $F$ has characteristic zero.
Thus $F$ is a finite extension of $\Q_p$,
where $p$ is the residual characteristic of $F$.

We will assume that $\G$ is connected.
However, certain reductive subgroups of $\G$ that
appear in this paper may not be connected.

Let $\pi$ be a complex representation
of $G$. A vector in the space $V$ of $\pi$
is said to be smooth if it is fixed by an
open subgroup of $G$. A smooth representation 
$\pi$ is a representation having the property that
every vector in $V$ is smooth.
If $\pi$ is smooth and has the additional property
that the space of $K$-fixed vectors
in $V$ is finite-dimensional for every
compact open subgroup $K$ of $G$,
we say that $\pi$ is admissible.
An admissible representation $\pi$ is
supercuspidal whenever the matrix 
coefficients of $\pi$ are compactly supported
modulo the centre $Z$ of $G$.

Let $V^*$ be the dual space of $\pi$ and let
$\pi^*$ be the representation of $G$ that is dual
to $\pi$. Let $\pi$ be a smooth representation
of $G$. The dual representation $\pi^*$
is not necessarily smooth. 
The contragredient $\wtp$ of $\pi$ is
the restriction of $\pi^*$ to the subspace $\wtV$ of smooth
vectors in $V^*$, and is clearly a smooth representation. 
It is easy to see that $\pi$ is admissible, resp.\ supercuspidal,
 if and only if $\wtp$ is admissible, resp.\ supercuspidal.

The usual pairing on $\wtV\times V$ will be denoted
by $\langle\cdot,\cdot\rangle$.
We identify $V$ and $\widetilde{\wtV}$ via
the isomorphism that takes a vector $v$ in $V$
to the smooth linear functional 
$\tilde v\mapsto \langle \tilde v,v\rangle$ 
on $\wtV$.
The pairing $\langle \cdot,\cdot\rangle$ extends
in the obvious way to the union of $V^*\times V$
and $\wtV\times \wtV^*$. The extension
will also be denoted by $\langle \cdot,\cdot\rangle$.

Let $C_c^\infty(G)$ be the space of 
complex-valued, locally
constant, compactly supported functions on $G$.
A distribution on $G$ is a linear functional on $C_c^\infty(G)$.
A $G$-invariant distribution on $G$ is one
that takes the same value on $f$ and the function $g\mapsto
f(xgx^{-1})$ for all $f\in C_c^\infty(G)$ and
all $x\in G$.

If $\theta:\G\rightarrow \G$ 
is an automorphism of order two that
is defined over $F$, we
say that $\theta$ is an involution of $G$.
Given such an involution, let $\G^\theta$
be the group of fixed points of $\theta$
in $\G$, and let $(\G^\theta)^\circ$ be
the identity component of $\G^\theta$.
Let $\H$ be
an $F$-subgroup of $\G$ such that $(\G^\theta)^\circ
\subset\H\subset \G^\theta$. Let $H=\H(F)$.
The space $G/H$ is called a (reductive) $p$-adic 
symmetric space.
A distribution on $G$ is said to be $H$-biinvariant
if it takes the same value on $f$ and on the function
$g\mapsto f(h_1gh_2)$ for all $f\in C_c^\infty(G)$ and
all $h_1$ and $h_2\in H$.

Let $G^\r$ be the set of regular 
elements of $G$. An element $g$ of $G$ belongs
to $G^\r$ if and only if the identity component
of the centralizer of $g$ in $\G$ is a maximal
torus. The set $G^\r$ is open and dense in $G$.

A torus $\T$ in $\G$ is called $\theta$-split
if $\theta(t)=t^{-1}$ for all $t\in \T$.
Let $\theta$ be an involution of $G$. The set
$G^{\tr}$ of $\theta$-regular elements in
$G$ consists of the elements $g$ in $G$
having the property that the intersection
of the centralizer of $g\,\theta(g)^{-1}$ in $\G$
with the connected component of the identity
in $\{\, x\in G\ | \ \theta(x)=x^{-1}\,\}$
is a maximal $\theta$-split torus in $\G$.
The set $G^\tr$ is open and dense in $G$.

Haar measure on a unimodular locally compact group
 $G$ will be denoted by $dg$.
If $G_1$ is a closed unimodular subgroup of
$G$, then $dg^\times$ will denote a 
$G$-invariant measure on the coset space $G/G_1$.

\section{Ordinary characters}

Let $\pi$ be an admissible representation of $G$.
If $f\in C_c^\infty(G)$, then the operator $\pi(f)=\int_G f(g)\,\pi(g)
\,dg$ has finite rank. 
The character $\Theta_\pi$ of $\pi$ is the $G$-invariant distribution
defined by $\Theta_\pi(f)={\rm trace}\,\pi(f)$, $f\in C_c^\infty(G)$.

The first part of the following theorem was originally proved 
by Harish-Chandra
(\cite{HC2}) for $G$ connected. Although we will not
need it here, we remark that it has since been
generalized to disconnected $G$ by Clozel (\cite{C}).
The second part of the theorem was proved by Harish-Chandra
for $K$ a good maximal compact subgroup, and was
generalized to arbitrary open compact subgroups 
by Rader and Silberger (\cite{RS}).

\begin{theorem}\label{chthm}
 Let $\pi$ be an admissible finite-length representation
of $G$.
\begin{enumerate}
\item The character distribution $\Theta_\pi$ is given by
integration against a locally integrable function
(also denoted by $\Theta_\pi$ and called the character
of $\pi$) on $G$. The function $\Theta_\pi$ is locally
constant on $G^\r$.
\item Let $K$ be an open compact subgroup of $G$.
Then $\int_K \pi(kgk^{-1})\, dk$ has finite rank
 whenever $g\in G^\r$.
If Haar measure on $K$ is normalized so that
$K$ has volume one, then
$$
\Theta_\pi(g)=\text{trace}\left(\int_K \pi(kgk^{-1})\, dk\right),
                         \qquad g\in G^\r.
$$
\end{enumerate}
\end{theorem}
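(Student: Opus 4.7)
The plan is to follow Harish-Chandra's original strategy, reducing computations to finite-dimensional spaces via admissibility. First, I would verify that $\Theta_\pi$ is well-defined as a distribution: for any $f\in C_c^\infty(G)$, choose a compact open subgroup $K$ such that $f$ is bi-$K$-invariant. Then $\pi(f)=\pi(e_K)\pi(f)\pi(e_K)$ has image in $V^K$, which is finite-dimensional by admissibility, so $\text{trace}\,\pi(f)$ is well-defined and gives a linear functional on $C_c^\infty(G)$.

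For part (2), the goal is to show $A_g:=\int_K\pi(kgk^{-1})\,dk$ has finite rank for $g\in G^\r$ and that $\Theta_\pi(g)=\text{trace}\,A_g$. A direct change of variable shows $A_g$ commutes with $\pi(K)$, hence preserves each $K$-isotypic subspace of $V$; each such subspace is finite-dimensional by admissibility (intersect with $V^{K'}$ for $K'\triangleleft K$ acting trivially on the isotype). The crucial step---where regularity of $g$ enters---is to show that only finitely many isotypes contribute. Following Rader-Silberger, I would approximate $A_g$ by operators $\pi(\phi_n)$ attached to a sequence of bi-$K$-invariant test functions $\phi_n$ whose supports shrink toward the compact $K$-conjugation orbit of $g$ (this orbit is compact precisely because the centralizer of $g$ is a torus when $g\in G^\r$). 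Each $\pi(\phi_n)$ has image in the finite-dimensional space $V^K$, and controlling the strong-operator convergence $\pi(\phi_n)\to A_g$ together with a uniform rank bound forces $A_g$ itself to have finite rank. The trace formula $\Theta_\pi(g)=\text{trace}\,A_g$ then follows by combining the identity $\Theta_\pi(\phi_n)=\text{trace}\,\pi(\phi_n)$ with the local constancy of $\Theta_\pi$ on $G^\r$, which yields $\Theta_\pi(\phi_n)\to\Theta_\pi(g)$ in the limit.

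For part (1), the character is already a distribution by the argument above; the deep content is that it is represented by a locally integrable function, locally constant on $G^\r$. I would reduce via parabolic induction and Casselman's theory of the constant term to the supercuspidal case, in which $\Theta_\pi(f)=\text{trace}\,\pi(f)$ can be rewritten via a matrix-coefficient integral to exhibit a locally integrable representative. Local constancy on $G^\r$ would then follow from the formula of part (2), since $g\mapsto\int_K\pi(kgk^{-1})\,dk$ visibly varies locally constantly in $g$ on $G^\r$. The main obstacle is establishing global local integrability: regular elements accumulate toward singular elements, and controlling the growth of $\Theta_\pi$ near the singular set requires Harish-Chandra's deep estimates on orbital integrals (equivalently, in modern form, Howe's finiteness conjecture for orbital integrals). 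This analysis near the singular locus is the technical heart of the theorem and by far the hardest step; parts (1) and (2) are then typically bootstrapped together, with the finite-rank formula of (2) feeding back to furnish the local constancy asserted in (1).
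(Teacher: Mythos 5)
The paper does not prove Theorem~\ref{chthm}; it is stated as background with attribution to Harish-Chandra~(\cite{HC2}), Clozel~(\cite{C}) for disconnected groups, and Rader--Silberger~(\cite{RS}) for general compact open $K$. There is no in-paper proof to compare against.

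Taken on its own terms, your sketch has a genuine gap at the finite-rank step of part~(2). You propose bi-$K$-invariant test functions $\phi_n$ whose supports shrink toward the orbit $\{kgk^{-1}:k\in K\}$, so that each $\pi(\phi_n)$ has image in the finite-dimensional space $V^K$. But bi-$K$-invariance forces $\mathrm{supp}\,\phi_n$ to be a union of double cosets $KxK$, and the smallest such union containing the orbit is all of $KgK$; the supports cannot shrink toward the orbit while remaining bi-$K$-invariant. Replacing $K$ by shrinking subgroups $K_n$ restores the approximation but ruins the uniform rank bound, since $\dim V^{K_n}$ grows without bound. The ingredient that actually forces finite rank is Harish-Chandra's submersion principle---the very subject of the cited Rader--Silberger paper---applied to the conjugation map near the regular element $g$; this is not visible in your sketch. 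There is also an unresolved circularity: your derivation of $\Theta_\pi(g)=\mathrm{trace}\,A_g$ in part~(2) invokes local constancy of $\Theta_\pi$ on $G^\r$ to pass to the limit $\Theta_\pi(\phi_n)\to\Theta_\pi(g)$, while your argument for part~(1) derives that local constancy from the formula in part~(2). The bootstrapping you mention has to be arranged carefully to break this cycle. Your identification of local integrability near the singular set as the deepest step, resting on Harish-Chandra's estimates, is on target.
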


\begin{theorem}\label{scthm}(\cite{HC1})
Let $\pi$ be an irreducible supercuspidal representation of
$G$ and let $d(\pi)$ be the formal degree of $\pi$.
If $\varphi$ is a matrix coefficient of $\pi$, then
\begin{enumerate}
\item $$
\varphi(1)\,\Theta_\pi(g)= d(\pi)\int_{G/Z}\int_K
\varphi(xkg k^{-1}x^{-1})\, dk\, dx^\times,
\qquad g\in G^{\r}.
$$
\item $$\varphi(1)\Theta_\pi(f)=d(\pi)\, \int_{G/Z}\int_G
f(g)\varphi(xg x^{-1})\, dg\, dx^\times,
\qquad f\in C_c^\infty(G).
$$
\end{enumerate}
\end{theorem}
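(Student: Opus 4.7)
My plan is to derive both parts from the Schur orthogonality relations for matrix coefficients of supercuspidal representations. Write $\varphi(g) = \langle \wtv, \pi(g) v\rangle$ for some $\wtv \in \wtV$ and $v \in V$. The Schur relations, whose integrals converge absolutely because matrix coefficients of $\pi$ are compactly supported modulo $Z$, say that for any $\wtw_1, \wtw_2 \in \wtV$ and $w_1, w_2 \in V$,
\begin{equation*}
d(\pi) \int_{G/Z} \langle \wtw_1, \pi(x) w_1 \rangle \, \langle \wtw_2, \pi(x^{-1}) w_2 \rangle \, dx^\times = \langle \wtw_1, w_2 \rangle \, \langle \wtw_2, w_1 \rangle.
\end{equation*}

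For part (2), I would first rewrite $\varphi(xgx^{-1}) = \langle \wtp(x^{-1})\wtv, \pi(g) \pi(x^{-1}) v\rangle$ and apply Fubini (justified by the compact support of $f$ combined with compact support of $\varphi$ modulo $Z$) to reshape the right-hand side of~(2) as
\begin{equation*}
d(\pi) \int_{G/Z} \langle \wtp(x^{-1}) \wtv, \pi(f) \pi(x^{-1}) v \rangle \, dx^\times.
\end{equation*}
Because $\pi$ is admissible, $\pi(f)$ has finite rank, so one can write $\pi(f) = \sum_{i=1}^N u_i \otimes \wtw_i$, meaning $\pi(f)w = \sum_i \langle \wtw_i, w\rangle u_i$. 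Expanding the pairing and exchanging the finite sum with the integral produces
\begin{equation*}
\sum_{i=1}^N d(\pi) \int_{G/Z} \langle \wtv, \pi(x) u_i \rangle \, \langle \wtw_i, \pi(x^{-1}) v\rangle \, dx^\times,
\end{equation*}
and each summand is evaluated by Schur orthogonality as $\langle \wtv, v\rangle \, \langle \wtw_i, u_i\rangle = \varphi(1) \, \langle \wtw_i, u_i\rangle$. Summing over $i$ gives $\varphi(1) \, \text{trace}\, \pi(f) = \varphi(1) \, \Theta_\pi(f)$.

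Part (1) follows by the same manoeuvre with $\pi(f)$ replaced by the operator $Q_g := \int_K \pi(kgk^{-1})\, dk$. By Theorem~\ref{chthm}(2), for $g \in G^\r$ the operator $Q_g$ has finite rank and $\Theta_\pi(g) = \text{trace}(Q_g)$, so a decomposition $Q_g = \sum_i u_i \otimes \wtw_i$ exists, and the identical Schur-orthogonality computation applied to
\begin{equation*}
\int_K \varphi(xkgk^{-1}x^{-1})\, dk = \langle \wtp(x^{-1})\wtv, Q_g\, \pi(x^{-1}) v\rangle
\end{equation*}
yields the stated identity.

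The principal technical obstacle is justifying the various interchanges of integration order. For part (2) the compact support of $f$ makes this routine. For part (1) one must verify that $x \mapsto \int_K \varphi(xkgk^{-1}x^{-1})\, dk$ is compactly supported on $G/Z$ for each fixed $g \in G^\r$, which follows from the compact support of $\varphi$ modulo $Z$ together with compactness of $K$. Once these convergence questions are settled, the argument reduces to a formal consequence of the finite-rank decompositions and the Schur orthogonality relations.
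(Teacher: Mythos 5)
The paper states this result with a citation to Harish-Chandra and does not reproduce a proof, so there is no in-paper argument to compare against. Your proof is correct and is essentially Harish-Chandra's original argument: rewrite each side using the finite-rank operators $\pi(f)$ (for part (2)) and $Q_g=\int_K\pi(kgk^{-1})\,dk$ (for part (1)), expand them as finite sums of rank-one operators $u_i\otimes\wtw_i$, and evaluate term by term via Schur orthogonality for a supercuspidal (quasi-discrete series) representation. One small caveat about the closing paragraph: the claim that compact support modulo $Z$ of $x\mapsto \int_K \varphi(xkgk^{-1}x^{-1})\,dk$ follows merely from compact support of $\varphi$ modulo $Z$ and compactness of $K$ is not quite right --- for a non-elliptic $g\in G^\r$ the orbital integral of a function compactly supported modulo $Z$ does not converge on support grounds alone, since the set $\{x : xC_gx^{-1}\cap \operatorname{supp}\varphi\neq\emptyset\}$ need not be compact modulo $Z$. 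But your computation already contains the correct and easy justification: because $Q_g$ has finite rank, $\langle\wtp(x^{-1})\wtv,\, Q_g\pi(x^{-1})v\rangle$ is a finite sum of products of two matrix coefficients of $\pi$, each compactly supported modulo $Z$, hence so is the sum, and this is what guarantees convergence of the outer integral.
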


\begin{remark} The above theorem was generalized
to discrete series characters by Rader and
Silberger (\cite{RS}).
\end{remark}

\section{Distinguished representations and $p$-adic 
symmetric spaces}
\label{sec:dist}

Let $\pi$ be a smooth representation of $G$.
 Let $\Hom_H(\pi,1)$ be the space
of $H$-invariant elements in the dual $V^*$
of the space $V$ of $\pi$.
The representation $\pi$ is said to be distinguished
(or $H$-distinguished) if $\Hom_H(\pi,1)$ is nonzero.

Irreducible admissible representations $\pi$ having the
property that
both $\pi$ and $\wtp$ are distinguished
are referred to as
as class one representations.
The supercuspidal representations that we study in
this paper have the property that $\Hom_H(\pi,1)$ is
isomorphic to $\Hom_H(\wtp,1)$ (see Remark~\ref{sphd}).
Hence such representations
are distinguished if and only if they are class one.

Information on the structure of reductive $p$-adic
symmetric spaces
may be found in the papers \cite{HH}, \cite{HW}
and \cite{RR}.
It is customary to identify $H$-invariant distributions
on $G/H$ with $H$-biinvariant distributions on $G$
via composition with the map $f\mapsto {\bar f}$ where
${\bar f}(gH)=\int_H f(gh)\, dh$, $f\in C_c^\infty(G)$,
$g\in G$.

The spherical characters of class 
one representations 
play a key role in harmonic analysis on $G/H$.
Because the dimension of $\Hom_H(\pi,1)$ can be greater
than one,
the dimension of the space of spherical characters
of a representation $\pi$ can be greater than one.
Some examples of supercuspidal representations
for which $\dim\Hom_H(\pi,1)$ exceeds one
are given in \cite{HM}.

The reader may consult the introduction of \cite{RR} for a
discussion of the connections between harmonic analysis on
$p$-adic symmetric spaces and the relative trace formula,
including nonvanishing of period integrals of automorphic forms.

A study of $\Hom_H(\pi,1)$ for representations $\pi$ arising
via parabolic induction has been carried
out by Blanc and Delorme (\cite{BD}).

Distinguished supercuspidal representations have
been studied by various researchers. The papers 
\cite{H1}, \cite{H2}, \cite{H3}, \cite{HMa}, \cite{HM1},
\cite{HM2}, \cite{HM}, \cite{Pr1} and \cite{Pr2} (as well
as others not mentioned here) study
distinguished supercuspidal representations using approaches
that involve realizing supercuspidal
representations as induced representations (in the
sense discussed in Section~\ref{sec:scind}).
The most general cases are treated in \cite{HM},
which makes a detailed study of $\Hom_H(\pi,1)$ for
the supercuspidal representations constructed by
Yu (\cite{Y}).

Some authors (see, for example, \cite{AKT} and \cite{Ka}) 
show that in certain cases existence of
poles of $L$-functions can determine whether 
representations are distinguished.
This is related to the fact that in some
contexts the set of distinguished representations
can be described (sometimes conjecturally)
 as the set of representations
arising via some sort of functorial lift.
See, for example, \cite{AR}, \cite{AT}, \cite{F}
and \cite{HM1}.

The papers \cite{H1}, \cite{H2} and
\cite{H3} contain results on spherical
characters for particular examples.

We remark that, because of our focus on the $p$-adic case,
we are not mentioning papers that treat
distinguished automorphic representations exclusively.

\section{Spherical characters - definition and basic properties}
\label{sec:sphchdef}

Let $\pi$ be an irreducible admissible representation of $G$
and let $V$ be the space of $\pi$.
Let $\lambda_{\wtp}\in \wtV^*$. If $f\in C_c^\infty(G)$, define
$\pi(f)\lambda_{\wtp}\in \wtV^*$ by
$$
\langle\, \tilde v,\pi(f)\lambda_{\wtp} \,\rangle=
\langle\,\wtp(\check f)\tilde v,\lambda_{\wtp}\,\rangle,
\qquad \tilde v\in \wtV,
$$
where $\check f\in C_c^\infty(G)$ is defined by
${\check f}(g)=f(g^{-1})$, $g\in G$.
It is a simple matter to check that
$\pi(f)\lambda_{\wtp}$ is smooth for every $f\in C_c^\infty(G)$, 
that is, $\pi(f)\lambda_{\wtp}
\in \widetilde{\wtV}$. 
Hence, since we are identifying $V$ and $\widetilde{\wtV}$,
 we may (and do) identify $\pi(f)\lambda_{\wtp}$ 
with an element of $V$.
Let $\lambda_\pi\in V^*$. The map
$$ 
f\mapsto \langle\, \lambda_\pi, \pi(f)\lambda_{\wtp}\,\rangle,
\qquad f\in C_c^\infty(G),
$$
defines a distribution on $G$. These types of distributions can be
viewed as generalizations of those distributions given by
integration against matrix coefficients of representations.

If $\pi$ is a class one representation, the
spherical characters of $\pi$ are the linear combinations
of distributions of the above form
that are $H$-biinvariant. 
Let $\lambda_\pi$ and $\lambda_\wtp$
be nonzero elements of $\Hom_H(\pi,1)$ and $\Hom_H(\wtp,1)$,
respectively. Set
$$
\sh_\pi(f)=\langle\,\lambda_\pi, \pi(f)\lambda_{\wtp}\,\rangle,
\qquad f\in C_c^\infty(G).
$$
It follows from $H$-invariance of $\lambda_\pi$ and
$\lambda_\wtp$ that $\sh_\pi$ is $H$-biinvariant.
We will refer to $\sh_\pi$ as the spherical
character of $\pi$ associated to the pair
of linear functionals $\lambda_\pi$ and
$\lambda_{\wtp}$.
 
The statements in the second part of the following 
theorem can be viewed
as analogues for spherical characters of the results for ordinary
characters stated in Theorem~\ref{chthm}.
However, as indicated in \cite{RR}, the function that
realizes the spherical character is not generally locally 
integrable on $G$.
Let $C_c^\infty(G^\tr)$ be the space of complex-valued,
compactly supported, locally constant functions on
$G^\tr$.

\begin{theorem}\label{RRF}
 (\cite{RR})  Let $\sh_\pi$ be the spherical
character of $\pi$ that is associated to elements 
$\lambda_\pi\in \Hom_H(\pi,1)$
and $\lambda_\wtp\in \Hom_H(\wtp,1)$. Let $K$ be a compact
open subgroup of $G$.
\begin{enumerate}
\item If $g\in G^{\tr}$, then 
$\int_{K\cap H} \pi^*(kg^{-1})\lambda_\pi\, dk$
lies in $\wtV$. The function $g\mapsto \int_{K\cap H} 
\pi^*(kg^{-1})\lambda_\pi\, dk$ is a $C^\infty$ function from
$G^\tr$ to $\wtV$. 

\item The restriction of $\sh_\pi$ to $C_c^\infty(G^\tr)$ 
is given
by integration against a locally constant function
(also denoted by $\sh_\pi$) on $G^\tr$. If
the measure $dk$ is normalized so that $K\cap H$
has volume one, then
$$
\sh_\pi(g)=\big\langle\, \int_{K\cap H} \pi^*(kg^{-1})\, 
\lambda_\pi\, dk\, , \,
\lambda_\wtp\, \big\rangle, \qquad g\in G^\tr.
$$
\end{enumerate}
\end{theorem}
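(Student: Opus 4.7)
I would prove Part~1 first --- establishing smoothness of the integral as a $\wtV$-valued function on $G^{\tr}$ --- and then derive Part~2 by unfolding the distributional definition of $\sh_\pi$ and applying Fubini, justified by the smoothness obtained in Part~1.

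\textbf{Part~1.} The first move is to exploit the $H$-invariance identity $\pi^*(k)\lambda_\pi = \lambda_\pi$ for $k \in H$ to rewrite
$$
\int_{K\cap H} \pi^*(kg^{-1})\lambda_\pi\, dk
= \int_{K\cap H} \pi^*(kg^{-1}k^{-1})\lambda_\pi\, dk,
$$
converting the integral into one over the $K\cap H$-conjugation orbit of $g^{-1}$. This is the relative analogue of the conjugation integral $\int_K \pi(kgk^{-1})\,dk$ that underpins Theorem~\ref{chthm}(2). The deeper input is a $\theta$-regular counterpart of Harish-Chandra's transversality argument: because $g \in G^{\tr}$, the centralizer of $g\,\theta(g)^{-1}$ meets the $\theta$-antifixed locus in a maximal $\theta$-split torus, which provides a local slice near $g^{-1}$ transverse to $K\cap H$-conjugation. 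Combined with admissibility of $\pi$, this forces the integral to lie in a finite-dimensional $K'$-fixed subspace of $V^*$ for some compact open $K' \subset G$, and hence inside $\wtV$. Local constancy --- and thus $C^\infty$-smoothness --- of the map $g \mapsto \int_{K\cap H}\pi^*(kg^{-1})\lambda_\pi\, dk$ on $G^{\tr}$ then follows by checking that the same $K'$ and the same finite subspace can be used uniformly on a whole neighborhood of each $g_0 \in G^{\tr}$, which is a direct consequence of the slice description.

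\textbf{Part~2 and main obstacle.} For Part~2, I would begin with $\sh_\pi(f) = \langle \lambda_\pi, \pi(f)\lambda_{\wtp}\rangle$ for $f \in C_c^\infty(G^{\tr})$, unfold the defining integral of $\pi(f)\lambda_\wtp$, and use the $H$-invariance of $\lambda_\pi$ together with the normalization $\mathrm{vol}(K\cap H) = 1$ to insert an averaging over $K\cap H$ inside the pairing. A change of variables transforms the inner operator into $\pi^*(kg^{-1})\lambda_\pi$, and an application of Fubini --- legitimate thanks to Part~1 and the compact support of $f$ --- assembles the stated integral formula for $\sh_\pi(g)$ as the locally constant density representing the distribution on $G^{\tr}$. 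The principal difficulty lies entirely in Part~1: for ordinary characters, integration over all of $K$ in conjugation produces a $K$-fixed, hence smooth, vector automatically; here, however, only $K \cap H$ is available, so smoothness must be extracted from $\theta$-regularity itself via the slice analysis, and this is the technical core of the Rader--Rallis machinery being invoked.
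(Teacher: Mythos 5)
The paper does not actually prove Theorem~\ref{RRF}: it is quoted verbatim from Rader and Rallis~\cite{RR} and used as a black box throughout, so there is no in-paper argument to compare your proposal against. What can be assessed is whether your sketch is a faithful roadmap of the proof that \cite{RR} supplies, and whether the steps you do write down are correct.

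Your rewriting $\int_{K\cap H}\pi^*(kg^{-1})\lambda_\pi\,dk=\int_{K\cap H}\pi^*(kg^{-1}k^{-1})\lambda_\pi\,dk$ is a valid consequence of $H$-invariance of $\lambda_\pi$, and (since $\theta(k)=k$ for $k\in H$) it can equally be read as a $\theta$-twisted conjugation integral. Your outline for Part~2 --- averaging over $K\cap H$ inside the pairing, changing variables, and invoking Fubini once smoothness from Part~1 is in hand --- is the standard unfolding and is sound in principle. However, the proposal is not a proof: the entire content of Part~1 is the assertion that $\theta$-regularity of $g$ yields a ``local slice near $g^{-1}$ transverse to $K\cap H$-conjugation'' forcing the averaged vector into a finite-dimensional $K'$-fixed subspace, and you explicitly defer this to ``the technical core of the Rader--Rallis machinery.'' That deferred step is precisely what needs to be shown, and it is genuinely delicate: $\theta$-regularity is a hypothesis on the centralizer of $g\,\theta(g)^{-1}$ intersected with the anti-fixed locus, not ordinary conjugation-regularity of $g^{-1}$, so recasting the integral as a plain $K\cap H$-conjugation integral does not by itself make the Harish-Chandra submersion argument applicable. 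What \cite{RR} establishes is a submersion principle adapted to the map $(K\cap H)\times A\to G$ built from a $\theta$-split torus $A$ near a $\theta$-regular point, and deriving finiteness and local constancy from that takes substantial work (the bulk of a long paper). Your proposal correctly locates where the difficulty lies and how Part~2 would follow from Part~1, but it supplies neither the slice construction nor the admissibility/finiteness argument that turns it into smoothness, so as written it is an accurate summary of the strategy rather than a proof.
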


\section{Matrix coefficients and spherical characters 
of supercuspidal representations}\label{sec:sphform}

Throughout this section we assume that  $\pi$ is an 
irreducible supercuspidal representation
having the property that the central quasicharacter
$\chi_\pi$ of $\pi$ is trivial on $H\cap Z$.
Note that $\pi$ cannot be distinguished if it
does not have this property.

The restriction of
a matrix coefficient of $\pi$ to $H$ is
$H\cap Z$-biinvariant and compactly
supported modulo $H\cap Z$. Let $V$ be
the space of $\pi$. Given 
$\wtv\in V$, define $\lambda_\wtv
\in V^*$ by
$$
\langle\, \lambda_\wtv,\, v\,\rangle = 
\int_{H/H\cap Z} \langle\, \wtv,\, \pi(h)v\,\rangle\, dh^\times,
\qquad v\in V.
$$
Note that
$\lambda_\wtv\in \Hom_H(\pi,1)$. Later in the paper
we describe which vectors $\wtv$ give rise
to nonzero elements of $\Hom_H(\pi,1)$ (when
$\pi$ is an induced representation).
Similarly, if we fix $v\in V$, we can
define $\lambda_v\in \Hom_H(\wtp,1)$ by
$$
\langle\, \wtv,\, \lambda_v\,\rangle = 
\int_{H/H\cap Z} \langle\, \wtp(h)\wtv,\, v\,\rangle\, dh^\times,
\qquad \wtv\in \wtV.
$$

Fix $v_0\in V$ and $\wtv_0\in \wtV$. Let $\varphi(g)
=\langle\, \wtv_0,\pi(g)v_0\,\rangle$, $g\in G$.
As we show below, we may use the matrix coefficient
$\varphi$ to define an $H$-biinvariant distribution
on $G$, and this distribution is a spherical character
of $\pi$ which is nonzero if and only if
$\lambda_{\wtv_0}$ and $\lambda_{v_0}$ are
both nonzero. Furthermore, the formula of
Rader and Rallis (Theorem~\ref{RRF}(2)) converts into
an integral formula for this spherical
character (see (4) below). The latter formula can 
be viewed as an 
analogue for spherical characters 
of Harish-Chandra's integral formula for the 
ordinary character of a supercuspidal 
representation (see Theorem~\ref{scthm}(1)).
Similarly, the expression for the distribution
$\sh_\pi$ in terms of integration of a matrix
coefficient 
is an analogue of the one in Theorem~\ref{scthm}(2).

\begin{theorem}\label{spch} Let $\pi$ and $\varphi$ be as above.
\begin{enumerate}
\item The map $f\mapsto D_{\varphi}(f)=
\int_{H/H\cap Z}\int_{H/H\cap Z} \int_G f(g)\, 
\varphi(h_1gh_2)\, dg\, dh_1^\times dh_2^\times$,
$f\in C_c^\infty(G)$, 
defines an $H$-biinvariant distribution on
$G$.
\item Fix $v_1\in V$ and $\wtv_1\in \wtV$.
Let $\dot\varphi(g)=\langle\, \wtv_1,\,\pi(g)^{-1}v_1\,\rangle$,
$g\in G$.
Suppose that
$f\in C_c^\infty(G)$ has the property that
$\dot\varphi(g)=\int_Z f(gz)\chi_\pi(z)\,dz$ for
all $g\in G$. 
Then
$$
D_\varphi(f)= d(\pi)^{-1} \langle\, \lambda_{\wtv_0},\, v_1\,\rangle
\langle \, \wtv_1,\, \lambda_{v_0}\,\rangle.
$$
\item Let $\sh_\pi$ be the spherical character of
of $\pi$ associated to $\lambda_{\wtv_0}$ and
$\lambda_{v_0}$. Then $\sh_\pi=D_\varphi$.
Moreover, $\sh_\pi$ is nonzero if and only if
$\lambda_{v_0}$ and $\lambda_{\wtv_0}$ are nonzero.
\item
Let $K$ be a compact open subgroup of $G$.
Normalize Haar measure on $K\cap H$ so
that $K\cap H$ has volume one. Then
$$
\sh_\pi(g)=\int_{H/H\cap Z}\int_{K\cap H} \int_{H/H\cap Z}
\varphi(h_2gkh_1)\, dh_2^\times\,dk\,dh_1^\times,
\qquad g\in G^\tr.
$$
\end{enumerate}
\end{theorem}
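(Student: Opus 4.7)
The plan is to prove (1)--(4) in sequence, with Harish-Chandra's Schur orthogonality for supercuspidal matrix coefficients doing the main work in (2) and direct unfolding of the definitions handling (3) and (4). The principal obstacle will be the uniform compactness step needed to justify Fubini in (1); the remainder is essentially bookkeeping once orthogonality is in hand.

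For (1), convergence of the triple integral is the key point. Since $\pi$ is supercuspidal with $\chi_\pi$ trivial on $H\cap Z$, any matrix coefficient of $\pi$ restricts to an $H\cap Z$-biinvariant function on $H$ that is compactly supported modulo $H\cap Z$. As $\varphi(h_1 g h_2) = \langle\wtp(h_1^{-1})\wtv_0, \pi(h_2)\pi(g)v_0\rangle$ is itself a matrix coefficient of $\pi$ in each $H$-variable separately, each iterated $H$-integral converges; a uniform compactness argument, exploiting that the embedding $H/H\cap Z \hookrightarrow G/Z$ is proper (closedness of $HZ$ in $G$), then controls the joint support of $(h_1, h_2) \mapsto \varphi(h_1 g h_2)$ as $g$ varies over the compact support of $f$ and justifies Fubini. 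The $H$-biinvariance of $D_\varphi$ is then immediate from translation-invariance of Haar measure on $H/H\cap Z$.

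For (2), the key tool is Schur orthogonality for supercuspidals:
$$\int_{G/Z}\langle\wtv, \pi(g)v\rangle \langle\wtv', \pi(g^{-1})v'\rangle\,dg^\times = d(\pi)^{-1}\langle\wtv, v'\rangle\langle\wtv', v\rangle.$$
Using the hypothesis on $f$ to fold the $Z$-integration in $\int_G f(g)\varphi(h_1 g h_2)\,dg$ into the factor $\dot\varphi(g)$, I reduce $D_\varphi(f)$ to a triple integral of $\varphi(h_1 g h_2)\dot\varphi(g)$ over $(H/H\cap Z)^2 \times G/Z$. Writing $\varphi(h_1 g h_2) = \langle\wtp(h_1^{-1})\wtv_0, \pi(g)\pi(h_2)v_0\rangle$ and applying orthogonality with the twisted vectors $\wtp(h_1^{-1})\wtv_0, \pi(h_2)v_0$ against $\wtv_1, v_1$ decouples the two $H$-integrations. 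By the definitions of $\lambda_{\wtv_0}$ and $\lambda_{v_0}$, combined with a change of variable $h_2 \to h_2^{-1}$, these collapse to $\langle\lambda_{\wtv_0}, v_1\rangle$ and $\langle\wtv_1, \lambda_{v_0}\rangle$ respectively, giving the claimed identity.

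For (3), I would unfold $\sh_\pi(f) = \langle\lambda_{\wtv_0}, \pi(f)\lambda_{v_0}\rangle$ by expanding $\pi(f)\lambda_{v_0}$ via its defining formula and $\lambda_{v_0}$ via its integral definition; after the substitutions $g \to g^{-1}$ in the $G$-integral and $h \to h^{-1}$ in the $H$-integral (both unimodular), the expression is visibly $D_\varphi(f)$. The non-vanishing statement is immediate in one direction, and the converse follows from (2) by choosing $v_1, \wtv_1$ making both pairings nonzero and constructing $f \in C_c^\infty(G)$ with $\int_Z f(gz)\chi_\pi(z)\,dz = \dot\varphi(g)$ by a standard cutoff-and-average argument on the compactly-supported-modulo-$Z$ function $\dot\varphi$. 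For (4), I apply Theorem~\ref{RRF}(2) with $\lambda_\pi = \lambda_{\wtv_0}$ and $\lambda_\wtp = \lambda_{v_0}$; unfolding $\pi^*(kg^{-1})\lambda_{\wtv_0}$ and its pairing with $\lambda_{v_0}$ via the defining integrals, and then substituting $k \to k^{-1}$ on the unimodular compact group $K\cap H$, produces the stated formula.
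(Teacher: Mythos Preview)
Your treatments of (2), (3), and (4) are correct and essentially identical to the paper's: Schur orthogonality for (2), unfolding the definitions for (3), and applying Theorem~\ref{RRF}(2) followed by the substitution $k\mapsto k^{-1}$ for (4).

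However, your argument for (1) has a genuine gap. You assert that properness of $H/H\cap Z\hookrightarrow G/Z$ ``controls the joint support of $(h_1,h_2)\mapsto\varphi(h_1gh_2)$'' uniformly as $g$ ranges over the support of $f$. This is false in general: the joint support need not be compact modulo $(H\cap Z)\times(H\cap Z)$ even for a single $g$. For instance, take $G=G_1\times G_1$ with $\theta$ the swap and $H=G_1$ embedded diagonally; for $g=1$ the function becomes $(h_1,h_2)\mapsto\varphi_1(h_1h_2)\varphi_2(h_1h_2)$, which is nonzero whenever $h_1h_2$ lies in a fixed compact-mod-center set, a condition that leaves $h_1$ completely unconstrained in $G_1/Z_1$. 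Properness of a single copy of $H$ in $G$ cannot see this ``hyperbolic'' non-compactness in $H\times H$.

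The paper's fix is to perform the $G$-integral first and work with
\[
\psi(h_1,h_2)=\int_G f(g)\,\varphi(h_1gh_2)\,dg=\langle\,\wtv_0,\,\pi(h_1)\pi(f)\pi(h_2)v_0\,\rangle.
\]
The key observation you are missing is that $\pi(f)$ has \emph{finite rank}. Hence the vectors $\pi(f)\pi(h_2)v_0$ span a finite-dimensional space as $h_2$ varies, so the functions $h_1\mapsto\psi(h_1,h_2)$ all lie in the span of finitely many matrix coefficients restricted to $H$, giving a single compact set $C_1\subset H$ (mod $H\cap Z$) containing their supports uniformly in $h_2$. The same argument with $\wtp(\check f)$ handles the $h_2$-support. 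Thus $\psi$ is compactly supported modulo $(H\cap Z)\times(H\cap Z)$, and the iterated integral defining $D_\varphi(f)$ converges. Replace your properness sketch with this finite-rank argument and the proof goes through.
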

\vfil\eject

\begin{remark}\begin{enumerate}
\item If we combine (2) and the first
part of (3), we obtain a special case of Lemma~4
of \cite{H4}.
\item Clearly we may extend the definition
of $D_\varphi$ to any function $\varphi$
belonging to the span of the matrix coefficients
of $\pi$. For each such function $\varphi$,
 $D_\varphi$ belongs to the space of spherical 
characters of $\pi$.
\end{enumerate}
\end{remark}

\begin{proof} Fix $f\in C_c^\infty(G)$. Define $\psi(h_1,h_2)= 
\langle\, \wtv_0,\, \pi(h_1)\pi(f)\pi(h_2)v_0\,\rangle$,
$h_1$, $h_2\in H$.
Note that $\psi$ is $H\cap Z$-biinvariant.
Since $\pi(f)$ has finite rank, the span of
the vectors $\pi(f)\pi(h_2)v_0$,
as $h_2$ ranges over $H$, is finite-dimensional.
Thus there is a finite set of matrix coefficients
of $\pi$ such the the span of their restrictions
to $H$ contains all of the functions
$h_1\mapsto \psi(h_1,h_2)$, as $h_2$ varies
in $H$.
It follows that there exists
a compact subset $C_1$ of $H$ such that
the support of each of the functions $h_1\mapsto \psi(h_1,h_2)$
lies inside $C_1(H\cap Z)\times H$.

Since $\psi(h_1,h_2)=\langle\, \wtp({\check f})\wtp(h_1)\wtv_0,
\, \pi(h_2)v_0\,\rangle$, we may use the fact that
$\wtp({\check f})$ has finite rank to see that
there exists a compact subset $C_2$ of $H$ such
that the support of each function $h_2\mapsto \psi(h_1,h_2)$
lies inside $C_2(H\cap Z)$.

It now follows that $\psi$ is supported in $C_1(H\cap Z)
\times C_2(H\cap Z)$. That is, $\psi$ has compact
support modulo $(H\cap Z)\times (H\cap Z)$.
Therefore the integral 
$$
\int_{H/H\cap Z}\int_{H/H\cap Z}\psi(h_1,h_2)
\, dh_1^\times\, dh_2^\times$$ converges. 
Because $\psi(h_1,h_2)=\int_G f(g)\,\varphi(h_1gh_2)\, dg$,
the above integral is equal to $D_\varphi(f)$.
It is clear from the form of $D_\varphi(f)$
that $D_\varphi$ is $H$-biinvariant. 

Next, fix $f\in C_c^\infty(G)$ as in (2). Let $h_1$, $h_2\in H$.
Then
\begin{equation*}
\begin{split}
\int_G f(g)\, \varphi(h_1 g h_2)\, dg &=\int_{G/Z}\int_Z
\, f(gz)\, \varphi(h_1gzh_2)\, dz\, dg^\times
=\int_{G/Z} \varphi(h_1gh_2)\, \dot\varphi(g)\, dg^\times \cr
&=\int_{G/Z} \langle\, \wtp(h_1^{-1})\wtv_0,\, \pi(g)\pi(h_2)v_0\,
\rangle\langle\, \wtv_1,\, \pi(g^{-1})v_1\,\rangle\, dg^\times \cr
& =d(\pi)^{-1} \langle\, \wtp(h_1^{-1})\wtv_0,\, v_1\,\rangle
\langle\wtv_1,\,\pi(h_2)\, v_0\,\rangle.
\end{split}
\end{equation*}
Note that the fourth equality above is obtained
via an application of the orthogonality relations
for matrix coefficients of quasi-discrete series 
representations.
In view of the above, it now follows from the definitions
of $D_\varphi$, $\lambda_{\wtv_0}$ and $\lambda_{v_0}$
that (for this particular choice of $f$) 
$D_\varphi(f)$ has the form given in (2).

Now we fix an arbitrary $f\in C_c^\infty(G)$. Then
\begin{equation*}
\begin{split}
\sh_\pi(f) &= \langle \, \lambda_{\wtv_0},\, \pi(f)\lambda_{v_0}\,\rangle
=\int_{H/H\cap Z} \langle\, \wtv_0,\pi(h_1)\pi(f)
                      \lambda_{v_0}\,\rangle\,dh_1^\times\cr
&=\int_{H/H\cap Z} \langle\, \wtp({\check f})\pi(h_1^{-1})\wtv_0,
\, \lambda_{v_0}\,\rangle\, dh_1^\times \cr
&=\int_{H/H\cap Z}\int_{H/H\cap Z} \langle\, \wtp({\check f})\pi(h_1^{-1})\wtv_0,
\, \pi(h_2)v_0\,\rangle\, dh_1^\times\, dh_2^\times =D_\varphi(f)
\end{split}
\end{equation*}
The assertion about nonvanishing of $\sh_\pi$ is an immediate consequence
of $\sh_\pi=D_\varphi$ and (2).

Fix $g\in G^\tr$. Let $K$ be as in the statement of the theorem.
Applying Theorem~\ref{RRF}(2) and using the definitions of
$\lambda_{\wtv_0}$ and $\lambda_{v_0}$, we find that
\begin{equation*}
\begin{split}
\sh_\pi(g) & =\big\langle \, \int_{K\cap H} \pi^*(kg^{-1})
 \lambda_{\wtv_0}\, dk,\, \lambda_{v_0}\,\big\rangle\cr
&= \int_{H/H\cap Z}\big\langle\, \int_{K\cap H} \pi^*(kg^{-1})
\lambda_{\wtv_0}\,dk,\, \pi(h_1)v_0\,\big\rangle\, dh_1^\times\cr
&=\int_{H/H\cap Z}\int_{K\cap H} \langle\, \lambda_{\wtv_0},
\,\pi(gk^{-1}h_1)v_0\,\rangle\, dk\, dh_1^\times\cr
&=\int_{H/H\cap Z}\int_{K\cap H}\int_{H/H\cap Z}
\langle\, \wtv_0,\, \pi(h_2gk^{-1}h_1)v_0\,\rangle dh_2^\times
dk\, dh_1^\times.
\end{split}
\end{equation*}
Upon making the change of variables $k\mapsto k^{-1}$, we
obtain the desired expression for $\sh_\pi(g)$.
\end{proof}

\section{Supercuspidal representations as induced representations}
\label{sec:scind}

Before discussing spherical characters of
supercuspidal representations in more detail, we make some remarks
about irreducible supercuspidal representations that can be
realized as induced representations.

Let $J$ be an open subgroup of $G$ that contains $Z$ and
has the property that $J/Z$ is compact.
Let $\kappa$ be an irreducible smooth representation of
$J$. Because $J/Z$ is compact, $\kappa$ is finite-dimensional.
We denote the space of $\kappa$ by $W$.
Let $\pi=\Ind_J^G\kappa$ be the smooth representation
of $G$ that is obtained via compact induction from
the representation $\kappa$. The space $V$ of
$\pi$ is the set of functions $\F$ from $G$ to $W$
satisfying
\begin{itemize}
\item $\F(jg)=\kappa(j)\F(g)$ for all $j\in J$ and $g\in G$
\item The support of $\F$ lies inside a finite union of
right cosets of $J$ in $G$.
\end{itemize}

\noindent If $\F\in V$ and $g\in G$, then $(\pi(g)\F)(x)=\F(xg)$
for all $x\in G$.

If $g\in G$, let $J^g=J\cap gJg^{-1}$. Define a representation
$\kappa^g$ of $J^g$ by $\kappa^g(x)=\kappa(g^{-1}xg)$, 
$x\in J^g$. It is known that $\pi$ is irreducible
if and only if for every $g\in G-J$ the representations
$\kappa^g$ and $\kappa\,|\, J^g$ have no constituents
in common. We remark that this irreducibility criterion takes
the same form as Mackey's irreducibility criterion for
induced representations of finite groups.

Any smooth irreducible representation is admissible.
Observe that $\pi$ has matrix coefficients that are compactly
supported modulo $Z$. Hence, whenever $\pi$ is irreducible,
$\pi$ is supercuspidal.

It is conjectured that every irreducible supercuspidal
representation of a connected reductive $p$-adic group $G$ 
has the above form (for some choice of $J$ and $\kappa$).
This conjecture has been verified for many groups.
We do not take the time for a detailed discussion of
cases for which
the conjecture has been verified.
We remark that if $G$ is a general linear group, the conjecture 
follows from work on Bushnell and Kutzko (\cite{BK}) on
parametrizing the admissible dual of $G$. Also, if $G$
is a connected reductive $p$-adic group that
splits over a tamely ramified extension of $F$ and satisfies 
some tameness hypotheses, J.-L.\ Kim (\cite{K}) has
shown that the tame supercuspidal representations
of $G$ constructed by Yu (\cite{Y}) exhaust the
irreducible supercuspidal representations of $G$.

\section{Spherical characters of induced supercuspidal
representations}

Suppose that $\pi$ is an irreducible
supercuspidal representation of $G$ that is of the form
$\Ind_J^G\kappa$ (as discussed in the previous section).
In the first part of this section, we state a 
result due to Hakim and Mao
that gives a direct sum decomposition of the space $\Hom_H(\pi,1)$.
Let $W$ and $\wtW$ be the spaces of $\kappa$ and of $\wtk$,
respectively.
If $J_1$ is a subgroup of $J$, we denote the spaces
of $J_1$-fixed vectors in $W$ and $\wtW$ by
$W^{J_1}$ and $\wtW^{J_1}$, respectively.
Each of the summands in the decomposition of
$\Hom_H(\pi,1)$ is isomorphic to
$\wtW^{J\cap gHg^{-1}}$ for some $g\in G$.
As indicated below, this implies
that $\Hom_H(\pi,1)$ is spanned by elements of the form
$\lambda_{\wtv_0}$ (as defined in Section~\ref{sec:sphform}) 
for particular kinds of vectors $\wtv_0\in \wtV$.
As a consequence, each spherical character
of $\pi$ is of the form $D_\varphi$ for some
finite linear combination $\varphi$ of matrix 
coefficients of $\pi$. 

At the end of
the section, we demonstrate that certain
spherical characters of $\pi$ are linearly
independent. This is used to 
show that the dimension
of the space of spherical characters of
$\pi$ is the square of the sum of the dimensions
of the spaces $W^{J\cap gHg^{-1}}$ as $g$
ranges over a 
set of representatives for
the $J$-$H$ double cosets in $G$.

The contragredient representation
$\wtp$ may (and will) be realized as $\Ind_J^G\wtk$.
Hence $\wtV$ is realized as the set of functions $\F$ from
$G$ to $\wtW$ that  have the same support properties
as functions in $V$, and satisfy
$\F(jg)=\wtk(j)\,\F(g)$ for all $j\in J$ and $g\in G$.

 Let 
$\langle\cdot,\cdot\rangle_W$ be the usual pairing
of $\wtW$ and $W$.
We normalize the invariant measure on $G/J$ in such a way
that the pairing $\langle \cdot,\cdot\rangle$ of $\wtV$ with $V$
is given by
$$
\langle\, \tilde\F,\F\,\rangle = \int_{G/J} \langle
\, \tilde\F(g),\F(g)\,\rangle_W\, dg^\times,
\qquad \tilde\F\in \wtV,\, \F\in V.
$$

\begin{lemma}\label{hakmao} (\cite{HMa}) 
Then
$$
\Hom_H(\pi,1)\simeq \bigoplus_{g\in J\backslash G/H}\,
\wtW^{J\cap gHg^{-1}},
$$
where the sum is over a set of representatives for
the $J$-$H$ double cosets in $G$.
\end{lemma}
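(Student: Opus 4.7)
The plan is to decompose $V$ according to the $J$-$H$ double coset decomposition of $G$ and then apply Frobenius reciprocity for compact induction on each piece. For each double coset $C=JgH$, let $V_C$ be the subspace of those $\F\in V$ whose support is contained in $C$. Because every $\F\in V$ has support in finitely many right $J$-cosets, hence in finitely many double cosets, one obtains a direct sum decomposition $V=\bigoplus_C V_C$ into $H$-stable subspaces. An $H$-invariant linear functional on $V$ is therefore determined by its restrictions to the summands $V_C$, so it will suffice to identify each $\Hom_H(V_C,1)$ with $\wtW^{J\cap gHg^{-1}}$ for the chosen representative $g$ of $C$.

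Fixing such a $g$, the next step is to realize $V_C$ explicitly as a compactly induced representation of $H$. The map $\F\mapsto\tilde\F$, $\tilde\F(h)=\F(gh)$, identifies $V_C$ with the space of functions $\tilde\F:H\to W$ supported on a finite union of right cosets of $H\cap g^{-1}Jg$ and satisfying
$$
\tilde\F(h'h)=\kappa(gh'g^{-1})\,\tilde\F(h),\qquad h'\in H\cap g^{-1}Jg,\ h\in H.
$$
The transformation rule is forced by writing $jg=g(g^{-1}jg)$ for $j\in J\cap gHg^{-1}$. In other words, $V_C$ is isomorphic, as an $H$-module under right translation, to the compact induction $\Ind_{H\cap g^{-1}Jg}^H\kappa_g$, where $\kappa_g$ denotes the representation $h'\mapsto\kappa(gh'g^{-1})$ of $H\cap g^{-1}Jg$ on $W$.

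Frobenius reciprocity for compact induction then gives
$$
\Hom_H(V_C,1)\simeq \Hom_{H\cap g^{-1}Jg}(\kappa_g,1),
$$
whose right hand side consists of the linear functionals $\mu\in W^*$ satisfying $\mu(\kappa(j)w)=\mu(w)$ for all $j\in J\cap gHg^{-1}$. Because $W$ is finite-dimensional, every such $\mu$ is automatically smooth, so $W^*=\wtW$ and this space equals $\wtW^{J\cap gHg^{-1}}$. Assembling the summands yields the claimed isomorphism.

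The principal obstacle is the bookkeeping in the second step: one must verify that conjugation by $g$ produces a well-defined representation of $H\cap g^{-1}Jg$ (this is where the standing assumption that $\chi_\pi$ is trivial on $H\cap Z$ ensures that the central character behaves correctly on the subgroups appearing) and that the compact-support condition on $\F$ translates to the correct compact-support condition on $\tilde\F$. Beyond these verifications the argument is a standard application of Mackey-style reasoning, with no input deeper than Frobenius reciprocity for compact induction.
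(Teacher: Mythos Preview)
The paper does not supply its own proof of this lemma; it is quoted from \cite{HMa}. Your Mackey-style argument is the standard route to such a statement and is essentially correct. The explicit description of the isomorphism that the paper records immediately after the lemma---sending $\wtw\in\wtW^{J\cap gHg^{-1}}$ to the functional $\F\mapsto\int_{H/H\cap Z}\langle\wtw,\F(gh)\rangle_W\,dh^\times$---is precisely the Frobenius-reciprocity map you invoke, written out concretely as an integral.

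Two small points of precision. First, from $V=\bigoplus_C V_C$ one obtains $\Hom_H(V,1)\simeq\prod_C\Hom_H(V_C,1)$, a direct \emph{product}; this agrees with the direct sum in the lemma only when all but finitely many factors vanish, something the paper does not assert in general (finite-dimensionality of $\Hom_H(\pi,1)$ is treated as an additional hypothesis later). Your argument in fact proves the product version, which is what is actually used downstream. Second, the triviality of $\chi_\pi$ on $H\cap Z$ is not needed for $\kappa_g$ to be a well-defined representation of $H\cap g^{-1}Jg$; rather, if $\chi_\pi$ were nontrivial on $H\cap Z$ then every $\wtW^{J\cap gHg^{-1}}$ would automatically be zero, consistently with $\Hom_H(\pi,1)=0$.
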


\begin{remark}\label{sphd} As $\widetilde\wtW\simeq W$
and $W^{J\cap gHg^{-1}}$ is isomorphic to $\wtW^{J\cap gHg^{-1}}$
for all $g\in G$, it follows from the above lemma
that $\Hom_H(\pi,1)\simeq\Hom_H(\wtp,1)$. In particular,
$\pi$ is class one if and only if $\pi$ is distinguished.
\end{remark}

Suppose $g\in G$ is such that $W^{J\cap gHg^{-1}}\not=0$.
Let $\wtw$ be a nonzero element of $\wtW^{J\cap gHg^{-1}}$.
The element $\lambda_\pi\in \Hom_H(\pi,1)$ that corresponds
to $\wtw$ under the isomorphism of Lemma~\ref{hakmao}
is defined by
$$
\langle\,\lambda_\pi,\F \,\rangle=\int_{H/H\cap Z}
\langle\, \wtw, \F(gh)\, \rangle_W\, dh^\times,
\qquad \F \in V.
$$
We remark that the formula given in \cite{HMa} involves
an integral over $(H\cap g^{-1}Jg)\backslash H$.
As $g^{-1}Jg/Z$ is compact, the group $H\cap gJg^{-1}$
is compact modulo $H\cap Z$. Thus, as long as we normalize
measures appropriately, the above formula agrees with
the one from \cite{HMa}.

Let $\F_{\wtw}$ be the unique element of $\wtV$ 
that is supported on $J$ and satisifes $\F_{\wtw}(1)=\wtw$.
It follows from
the definition of $\lambda_\pi$ that 
$\lambda_\pi=\lambda_{\wtv_0}$ for $\wtv_0=\wtp(g^{-1})\F_\wtw$.

Similarly, if we fix a nonzero $w\in W^{J\cap g^\prime Hg^{\prime -1}}$
the associated $\lambda_{\wtp}\in \Hom_H(\wtp,1)$ is
of the form $\lambda_{v_0}$ with $v_0=\pi(g^{\prime -1})\F_w$,
where $\F_w$ is the function in $V$ that is supported on
$J$ and satisfies $\F_w(1)=w$.

In view of the above discussion,
the space of spherical characters of $\pi$
may be described as follows.

\begin{lemma}\label{mclemma}
Each spherical character of $\pi$ 
is of the form $D_\varphi$ for some function
$\varphi$ that is a linear combination of
matrix coefficients of $\pi$. Each matrix
coefficient in the linear combination
can be taken to be of the form 
$x\mapsto \langle\, \wtv_0,\, \pi(x)\,v_0\,
\rangle$ where $\wtv_0=\wtp(g^{-1})\F_\wtw$,
$g\in G$, $\wtw\in \wtW^{J\cap gHg^{-1}}$,
and $v_0=\pi(g^{\prime -1})\F_w$, $g^\prime
\in G$, and $w\in W^{J\cap g^\prime H g^{\prime -1}}$.
\end{lemma}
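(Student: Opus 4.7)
My plan is to assemble the statement directly from Lemma~\ref{hakmao} together with Theorem~\ref{spch}(3); the content is really bookkeeping, since those two results already carry the essential information.

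I would start from the definition of spherical character in Section~\ref{sec:sphchdef}: each such distribution is a finite linear combination of $H$-biinvariant distributions of the form $f\mapsto \langle\lambda_\pi,\pi(f)\lambda_\wtp\rangle$, and (after averaging against $H$ on each side, or simply since the $H$-biinvariant ones in this span are exactly those with $\lambda_\pi\in\Hom_H(\pi,1)$ and $\lambda_\wtp\in\Hom_H(\wtp,1)$) it suffices to treat the case $\sh_\pi$ attached to one such pair. Using Lemma~\ref{hakmao} together with the explicit formula for $\lambda_\pi$ in terms of $\wtw\in\wtW^{J\cap gHg^{-1}}$ recalled immediately after the lemma, I would expand $\lambda_\pi$ as a linear combination of elements $\lambda_{\wtv_0}$ with $\wtv_0=\wtp(g^{-1})\F_\wtw$, where $g$ ranges over a set of $J$-$H$ double coset representatives and $\wtw$ ranges over a basis of $\wtW^{J\cap gHg^{-1}}$. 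The same decomposition, applied with $\pi$ replaced by $\wtp$ (and using $\widetilde{\wtp}\simeq\pi$), expresses $\lambda_\wtp$ as a linear combination of $\lambda_{v_0}$ with $v_0=\pi(g^{\prime -1})\F_w$ and $w\in W^{J\cap g^\prime Hg^{\prime -1}}$.

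Because the assignment $(\lambda_\pi,\lambda_\wtp)\mapsto \sh_\pi$ is bilinear in its two arguments, substituting these expansions presents the original spherical character as a linear combination of spherical characters attached to pairs $(\lambda_{\wtv_0},\lambda_{v_0})$ of the required form. Theorem~\ref{spch}(3) identifies each of those with the distribution $D_\varphi$ for the matrix coefficient $\varphi(x)=\langle\wtv_0,\pi(x)v_0\rangle$, and since the map $\varphi\mapsto D_\varphi$ is linear (immediate from the defining triple integral in Theorem~\ref{spch}(1)), collecting terms yields a single $D_\varphi$ whose $\varphi$ is a linear combination of matrix coefficients of the stated shape. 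The main work is just keeping the indices straight; no analytic or algebraic difficulty intervenes, as all the substantive input has already been packaged into Lemma~\ref{hakmao} and Theorem~\ref{spch}.
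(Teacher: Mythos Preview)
Your proposal is correct and mirrors the paper's own argument, which is given as the discussion immediately preceding the lemma: apply Lemma~\ref{hakmao} (and its analogue for $\wtp$) to write $\lambda_\pi$ and $\lambda_\wtp$ as linear combinations of the $\lambda_{\wtv_0}$ and $\lambda_{v_0}$ of the stated form, then invoke Theorem~\ref{spch}(3) and bilinearity. The paper does not give a separate formal proof beyond ``In view of the above discussion,'' and your writeup simply makes that discussion explicit.
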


Let $\{\, g_i\ | \ i\in I\,\}$ be a 
set of representatives for a set of
distinct $J$-$H$ double cosets in $G$ for
which $W^{J\cap g_iHg_i^{-1}}\not=0$, $i\in I$.
For each $i\in I$, let $J_i=J\cap g_iHg_i^{-1}$ 
and $n_i=\dim W^{J_i}$.
Choose a basis $\beta_i$ of $W$
in such a way that the first $n_i$ vectors $w_1^{(i)},\dots,
w_{n_i}^{(i)}$ of $\beta_i$ form a basis of $W^{J_i}$,
and the first $n_i$ vectors $w_1^{(i)*},\dots
w_{n_i}^{(i)*}$ of the basis of $\wtW$ dual
to $\beta_i$ form a basis of $\wtW^{J_i}$.

Let $I^\prime=\{\, \alpha = (i,j)\ | \ i\in I,\  1\le j\le n_i\,\}$.
If $\alpha=(i,j)\in I^\prime$, set $v_\alpha= \pi(g_i^{-1})\F_{w_j^{(i)}}$
and $\wtv_\alpha=\wtp(g_i^{-1})\F_{\wtw_j^{(i)*}}$.
If $\alpha$, $\beta\in I^\prime$, let $\sh_\pi^{(\alpha,\beta)}$
be the spherical character associated to $\lambda_{\wtv_\alpha}$
and $\lambda_{v_\beta}$.

\begin{lemma} \label{indep}
The distributions $\sh_\pi^{(\alpha,\beta)}$,
$\alpha$, $\beta\in I^\prime$, are linearly independent.
\end{lemma}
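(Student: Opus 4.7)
The plan is to prove linear independence by pairing the distributions $\sh_\pi^{(\alpha,\beta)}$ against a family of test functions $\{f_{\alpha_0,\beta_0}\}_{(\alpha_0,\beta_0)\in I'\times I'}$ chosen so that the resulting matrix of values is diagonal with nonzero diagonal entries. By Theorem~\ref{spch}(3), $\sh_\pi^{(\alpha,\beta)}=D_{\varphi_{\alpha,\beta}}$ with $\varphi_{\alpha,\beta}(g)=\langle\,\wtv_\alpha,\,\pi(g)v_\beta\,\rangle$. For each pair $(\alpha_0,\beta_0)$, I take $v_1=v_{\alpha_0}$ and $\wtv_1=\wtv_{\beta_0}$ in Theorem~\ref{spch}(2); since $\pi$ is supercuspidal the function $\dot\varphi(g)=\langle\,\wtv_{\beta_0},\,\pi(g)^{-1}v_{\alpha_0}\,\rangle$ is compactly supported modulo $Z$ and transforms by $\chi_\pi^{-1}$ under $Z$, so a suitable $f_{\alpha_0,\beta_0}\in C_c^\infty(G)$ satisfying $\int_Z f_{\alpha_0,\beta_0}(gz)\chi_\pi(z)\,dz=\dot\varphi(g)$ exists by a standard averaging construction. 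Theorem~\ref{spch}(2) then yields
$$\sh_\pi^{(\alpha,\beta)}(f_{\alpha_0,\beta_0})=d(\pi)^{-1}\langle\,\lambda_{\wtv_\alpha},\,v_{\alpha_0}\,\rangle\langle\,\wtv_{\beta_0},\,\lambda_{v_\beta}\,\rangle.$$

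The key step is to show that $\langle\,\lambda_{\wtv_\alpha},\,v_{\alpha_0}\,\rangle$ vanishes unless $\alpha=\alpha_0$, and is a positive constant on the diagonal. Writing $\alpha=(i,j)$, $\alpha_0=(i_0,j_0)$, and using the Hakim--Mao formula following Lemma~\ref{hakmao} (with $\wtw=\wtw_j^{(i)*}$ and $g=g_i$), the pairing equals
$$\int_{H/H\cap Z}\langle\,\wtw_j^{(i)*},\,\F_{w_{j_0}^{(i_0)}}(g_ihg_{i_0}^{-1})\,\rangle_W\,dh^\times.$$
The integrand is nonzero only when $g_ihg_{i_0}^{-1}\in J$; this forces $Jg_iH=Jg_{i_0}H$, and since the $g_i$ represent distinct $J$-$H$ double cosets, the integrand vanishes identically when $i\neq i_0$. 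When $i=i_0$, the condition reduces to $h\in g_i^{-1}J_ig_i$, and the $J_i$-invariance of $w_{j_0}^{(i)}$ together with the biorthogonality of the chosen bases of $W^{J_i}$ and $\wtW^{J_i}$ collapses the integrand to the constant $\delta_{j,j_0}$. Integrating gives $\langle\,\lambda_{\wtv_\alpha},\,v_{\alpha_0}\,\rangle=c_i\delta_{\alpha,\alpha_0}$, where $c_i>0$ is the measure of $(g_i^{-1}J_ig_i)(H\cap Z)/(H\cap Z)$ in $H/(H\cap Z)$ (positive because $g_i^{-1}J_ig_i$ is open in $H$ and compact modulo $H\cap Z$). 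A symmetric computation produces $\langle\,\wtv_{\beta_0},\,\lambda_{v_\beta}\,\rangle=c'_{i(\beta)}\delta_{\beta,\beta_0}$.

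Combining these identities yields $\sh_\pi^{(\alpha,\beta)}(f_{\alpha_0,\beta_0})=d(\pi)^{-1}c_{i(\alpha)}c'_{i(\beta)}\delta_{\alpha,\alpha_0}\delta_{\beta,\beta_0}$, which is nonzero precisely when $(\alpha,\beta)=(\alpha_0,\beta_0)$. Any linear relation $\sum_{\alpha,\beta}c_{\alpha,\beta}\sh_\pi^{(\alpha,\beta)}=0$ evaluated at $f_{\alpha_0,\beta_0}$ therefore forces $c_{\alpha_0,\beta_0}=0$, completing the proof. I expect the main obstacle to be the bookkeeping in the support analysis: correctly invoking the disjointness of distinct $J$-$H$ double cosets to eliminate off-diagonal contributions in $i$, and then exploiting the $J_i$-invariance and biorthogonality of the bases to isolate the diagonal in $j$.
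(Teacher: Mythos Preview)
Your proof is correct and follows essentially the same approach as the paper's own argument: both construct test functions $f_{\alpha_0,\beta_0}$ from matrix coefficients via Theorem~\ref{spch}(2), apply Theorem~\ref{spch}(3) to reduce $\sh_\pi^{(\alpha,\beta)}(f_{\alpha_0,\beta_0})$ to a product of pairings, and then evaluate $\langle\lambda_{\wtv_\alpha},v_{\alpha_0}\rangle$ by the same support analysis (disjointness of $J$-$H$ double cosets forces $i=i_0$, $J_i$-invariance and biorthogonality force $j=j_0$). The only differences are cosmetic---your indexing of the test functions is transposed relative to the paper's, and you cite the Hakim--Mao formula for $\lambda_\pi$ directly rather than unwinding the definition of $\lambda_{\wtv_\alpha}$---but the computations are identical.
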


\begin{proof} Let $\alpha$, $\beta\in I^\prime$.
According to Theorem~\ref{spch}(3),
 $\sh_\pi^{(\alpha,\beta)}=D_{\varphi^{(\alpha,\beta)}}$
where $\varphi^{(\alpha,\beta)}(g)=
\langle\, \wtv_\alpha,\, \pi(g)\, v_\beta\,\rangle$.
Choose $f_{(\alpha,\beta)}\in C_c^\infty(G)$ such
that
$$
\int_Z f_{(\alpha,\beta)}(gz)\,\chi_\pi(z)\, dz =\langle
\,\wtv_\alpha,\, \pi(g^{-1})\, v_\beta\,\rangle=
\varphi^{(\alpha,\beta)}(g^{-1}),\qquad g\in G.
$$

Let $\gamma$, $\delta\in I^\prime$. Applying Theorem~\ref{spch}(2),
we have
$$
\sh_\pi^{(\alpha,\beta)}(f_{(\gamma,\delta)})=d(\pi)^{-1}
\langle\, \lambda_{\wtv_\alpha},\, v_\delta\,\rangle\langle\, \wtv_\gamma,
\, \lambda_{v_\beta}\,\rangle.
$$
Suppose that $\alpha=(i,j)$ and $\delta=(\ell,m)$.
Then
\begin{equation*}
\begin{split}
\langle\, \lambda_{\wtv_\alpha},\, v_\delta\,\rangle &=
\int_{H/H\cap Z} \langle\, \wtp(g_i^{-1})\, \F_{w_j^{(i)*}},\,
\pi(hg_\ell^{-1})\, \F_{w_m^{(\ell)}}\,\rangle\, dh^\times\cr
&=\int_{H/H\cap Z} \langle\, w_j^{(i)*},\, \F_{w_m^{(\ell)}}
  (g_ihg_\ell^{-1})\, \rangle_Wdh^\times.
\end{split}
\end{equation*}
If $h\in H$, then $g_ihg_\ell^{-1}\in J$ implies $g_i\in Jg_\ell H$.
Thus $\langle\, \lambda_{\wtv_\alpha},\, v_\delta\,\rangle=0$ if $i\not=\ell$.
If $i=\ell$, then $\F_{w_m^{(i)}}(g_ihg_i^{-1})
\not=0$ if and only if $h\in g_i^{-1}Jg_i\cap H$. For such
$h$, $\F_{w_m^{(i)}}(g_i h g_i^{-1})=w_m^{(i)}$, since
$w_m^{(i)}\in W^{J_i}$.
Let $c_i$ be the volume of $(g_i^{-1}Jg_i\cap H)/Z$
relative to the measure $dh^\times$. When $i=\ell$, 
$$
\langle\, \lambda_{\wtv_\alpha},\, v_\delta\,\rangle=c_i\, 
\langle \, w_j^{(i)*},\, w_m^{(i)}\,\rangle,
$$
which, by definition of $w_j^{(i)*}$ and $w_m^{(i)}$,
is equal to $c_i$ when $m=j$ and zero when $m\not=j$.
We conclude that $\langle\, \lambda_{\wtv_\alpha},\, v_\delta\,\rangle$
is nonzero if and only if $\alpha=\delta$.

A similar argument shows that 
$\langle\, \wtv_\gamma,\, \lambda_{v_\beta}\,\rangle$ is 
nonzero if and only if $\gamma=\beta$.

Thus $\sh_\pi^{(\alpha,\beta)}(f_{(\gamma,\delta)})$
is nonzero if and only if $\alpha=\gamma$ and
$\beta=\delta$. This proves the lemma.
\end{proof}

The following theorem is a consequence of
the above lemma.

\begin{theorem}\label{dimen}
\begin{enumerate}
\item Let $g_1$, $g_2,\dots $ be representatives for the
set of all distinct $J$-$H$ double cosets in $G$ that
contain elements $g\in G$ for which
$W^{J\cap gHg^{-1}}$ is nonzero.
Let $I^\prime=\{\, \alpha=(i,j)\ | \ i\in I,\  1\le j\le n_i\,\}$.
Define spherical characters
$\sh_\pi^{(\alpha,\beta)}$, $\alpha$,
$\beta\in I^\prime$, as above.
Then $\{\, \sh_\pi^{(\alpha,\beta)}\ | \
\alpha,\, \beta\in I^\prime\,\}$ is a basis
for the space of spherical characters of
$\pi$.
\item If $\dim\Hom_H(\pi,1)=s<\infty$, then the 
dimension of the space of spherical
characters of $\pi$ is $s^2$.
\end{enumerate}
\end{theorem}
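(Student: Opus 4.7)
The plan is to combine Lemma~\ref{indep} (linear independence) with a spanning argument derived from the Hakim--Mao decomposition (Lemma~\ref{hakmao}). Part (2) then drops out as a dimension count.

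For part (1), linear independence of $\{\sh_\pi^{(\alpha,\beta)}\mid \alpha,\beta\in I^\prime\}$ is already Lemma~\ref{indep}, so the remaining task is to show that these distributions span the space of spherical characters. By definition, every spherical character of $\pi$ is a linear combination of distributions of the form $f\mapsto\langle\lambda_\pi,\pi(f)\lambda_{\wtp}\rangle$ with $\lambda_\pi\in\Hom_H(\pi,1)$ and $\lambda_{\wtp}\in\Hom_H(\wtp,1)$. I would argue that the map $(\lambda_\pi,\lambda_{\wtp})\mapsto\sh_\pi$ is bilinear, so it suffices to express each $\lambda_\pi$ and each $\lambda_{\wtp}$ in a convenient basis. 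By Lemma~\ref{hakmao}, together with the choice of basis $\beta_i$ of $W$ and its dual basis of $\wtW$, the functionals $\lambda_{\wtv_\alpha}$, $\alpha\in I^\prime$, form a basis of $\Hom_H(\pi,1)$; the explicit identification of $\lambda_\pi$ with $\lambda_{\wtv_0}$ for $\wtv_0=\wtp(g^{-1})\F_\wtw$ was already made in the paragraph preceding Lemma~\ref{mclemma}, so I can invoke it directly. The symmetric statement gives that the $\lambda_{v_\beta}$, $\beta\in I^\prime$, form a basis of $\Hom_H(\wtp,1)$. Bilinearity then writes an arbitrary $\sh_\pi$ as a linear combination of the $\sh_\pi^{(\alpha,\beta)}$.

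For part (2), if $\dim\Hom_H(\pi,1)=s<\infty$ then Lemma~\ref{hakmao} gives $s=\sum_{i\in I}n_i=|I^\prime|$, so the basis exhibited in part (1) has cardinality $|I^\prime\times I^\prime|=s^2$.

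The only point requiring care, which I would regard as the \emph{only} real obstacle, is ensuring that the basis $\{\lambda_{\wtv_\alpha}\}$ produced from the Hakim--Mao isomorphism genuinely spans \emph{all} of $\Hom_H(\pi,1)$, rather than a proper subspace; this is immediate from Lemma~\ref{hakmao} once the basis vectors $w_j^{(i)*}$ of each summand $\wtW^{J_i}$ are matched with the functionals $\lambda_{\wtv_\alpha}$ via the explicit formula recalled above. Beyond that, the proof is essentially one bilinearity argument together with a one-line dimension count.
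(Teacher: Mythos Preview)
Your proposal is correct and matches the paper's approach. The paper itself gives no proof beyond the sentence ``The following theorem is a consequence of the above lemma,'' so you have simply spelled out what the paper leaves implicit: linear independence is Lemma~\ref{indep}, spanning follows from the Hakim--Mao decomposition (Lemma~\ref{hakmao}) together with the explicit identification of $\Hom_H(\pi,1)$ with the span of the $\lambda_{\wtv_\alpha}$ carried out just before Lemma~\ref{mclemma}, and part~(2) is the dimension count $|I'|=\sum_i n_i=s$.
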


\begin{remark}\label{dimg} In \cite{HM}, we showed that
$\Hom_H(\pi,1)$ is finite-dimensional
for each of the irreducible supercuspidal
representations constructed in \cite{Y}.
Furthermore, we exhibit some examples
of such representations $\pi$ for which
$\dim\Hom_H(\pi,1)$ is greater than one.
\end{remark}

\section{Vanishing properties of spherical characters}
\label{sec:vanish}

Let $\pi=\Ind_J^G\kappa$ be as in the previous 
section. Throughout this section we assume
that the residual characteristic of $F$ is odd.
Let $G^{\tu}$ be the set of topologically unipotent
elements in $G$. Each element of $G^{\tu}$ belongs
to the pro-unipotent radical of some parahoric subgroup
of $G$.
Define $\tau:G\rightarrow G$  by $\tau(g)=g\,\theta(g)^{-1}$,
$g\in G$.

\begin{lemma}\label{trivint} 
Suppose that $\theta(J)=J$. Let $g$, $g^\prime\in G$ 
be such that
$\tau(g)$, $\tau(g^\prime)\in J$ and $J\,g\,H\not=J\,g^\prime H$.
Let $\U$ be an open neighbourhood of the identity
having the property that $\tau(\U)\subset G^{\tu}$.
Then  $\U\,\cap\, (Hg^{-1}J\,g^\prime H)=\emptyset$.
\end{lemma}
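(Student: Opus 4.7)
The plan is to argue by contradiction: suppose $u \in \U \cap Hg^{-1}Jg'H$ and write $u = h_1 g^{-1}jg'h_2$ with $h_1, h_2 \in H$, $j \in J$. A direct calculation, using $\theta(h_i) = h_i$, $\theta(J) = J$, and the identity $\theta(g) = \tau(g)^{-1}g$ (which follows from the definition of $\tau$), should yield
\[
\tau(u) = h_1 g^{-1} k_0 g h_1^{-1}, \qquad k_0 := j\,\tau(g')\,\theta(j)^{-1}\,\tau(g)^{-1} \in J.
\]
Combined with the hypothesis $\tau(u) \in G^{\tu}$ and the conjugation-invariance of $G^{\tu}$, this forces $k_0 \in J \cap G^{\tu}$.

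Next I will invoke the odd residual characteristic to take square roots. Since $k_0$ is topologically unipotent and lies in $J$, the closed subgroup $\overline{\langle k_0\rangle}$ is abelian pro-$p$ and sits inside $J$; for $p$ odd, this allows a unique $s_0 \in J \cap G^{\tu}$ with $s_0^2 = k_0$. Let $p_0 := \sqrt{\tau(u)}$ be the unique square root of $\tau(u)$ in $G^{\tu}$; by uniqueness, $p_0 = h_1 g^{-1}s_0 g h_1^{-1}$. A standard uniqueness argument using $\theta(\tau(u)) = \tau(u)^{-1}$ then shows $\theta(p_0) = p_0^{-1}$.

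The key step is a polar-type decomposition $u = p_0 h_0$ with $h_0 := p_0^{-1}u$. The relations $\theta(p_0) = p_0^{-1}$ and $\theta(u) = \tau(u)^{-1}u = p_0^{-2}u$ give $\theta(h_0) = h_0$, so $h_0 \in G^\theta$. Since $u$ lies in the open neighbourhood $\U$ of the identity and $p_0$ is topologically unipotent, $h_0$ will be close to $1$, hence inside the open subgroup $(G^\theta)^\circ \subset H$; this is the step where the shape of $\U$ is essential.

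Finally, substituting $u = p_0 h_0 = h_1 g^{-1}s_0 g h_1^{-1}h_0$ into $u = h_1 g^{-1}jg'h_2$, cancelling $h_1 g^{-1}$ on the left, and rearranging yields $g = (s_0^{-1}j)\,g'\,(h_2 h_0^{-1}h_1)$, with $s_0^{-1}j \in J$ and $h_2 h_0^{-1}h_1 \in H$. Thus $g \in Jg'H$, forcing $JgH = Jg'H$, the desired contradiction. I expect the main obstacle to be the polar decomposition step: showing $h_0 \in H$ rather than merely $h_0 \in G^\theta$ is what genuinely uses the condition $\tau(\U) \subset G^{\tu}$, since it is what lets us place $h_0$ in the identity component $(G^\theta)^\circ$.
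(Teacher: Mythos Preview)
Your overall strategy matches the paper's: argue by contradiction, compute $\tau(u)$ as an $H$-conjugate of an element $k_0\in J$, exploit $k_0\in G^{\tu}$ together with odd residual characteristic to manufacture an auxiliary element of $J$, and conclude $g\in Jg'\cdot(\text{fixed point of }\theta)$. The paper first reduces to $g=1$ by replacing $J$ with $g^{-1}Jg$, and then, instead of your square-root construction, invokes Proposition~2.12 of \cite{HM}: in a $\theta$-stable group of the shape $J'=J\cap G_{x,t}\cap\theta(G_{x,t})$ one has $\{\,k'\in J'\mid \theta(k')=k'^{-1}\,\}=\tau(J')$, so $\tau(kg')=\tau(k_1)$ for some $k_1\in J'\subset J$, whence $k_1^{-1}kg'\in G^\theta$. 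Your square-root $s_0$ is precisely a hands-on proof of this in the case at hand (indeed $\theta(s_0)=s_0^{-1}$ forces $\tau(s_0)=s_0^2=k_0$), so substantively the two arguments coincide; yours is more self-contained, the paper's is shorter by citation.

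The gap is exactly where you flagged it. Your claim that $h_0=p_0^{-1}u$ is ``close to $1$'' is not supported by the hypotheses: the lemma is stated for an \emph{arbitrary} open neighbourhood $\U$ with $\tau(\U)\subset G^{\tu}$, so $u$ need not be small, and membership in $G^{\tu}$ says nothing about proximity to the identity (a regular unipotent element is topologically unipotent). What your argument actually proves is $h_0\in \G^\theta(F)$, hence $g\in Jg'\,\G^\theta(F)$, not $g\in Jg'H$. For comparison, the paper at the corresponding point simply writes ``$k_1^{-1}kg'\in H$'' with no justification beyond $\tau(k_1)=\tau(kg')$, which likewise only yields $k_1^{-1}kg'\in \G^\theta(F)$. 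So if one reads the lemma with $H=\G^\theta(F)$ (as in the applications drawn from \cite{HM} later in the section), both proofs are complete; for proper $H\subsetneq \G^\theta(F)$ neither closes this step, and your proposed ``close to $1$'' mechanism does not repair it.
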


\begin{proof} Let $\dot J= g^{-1}Jg$. Then 
$$
\theta(\dot J)=\theta(g^{-1})\,\theta(J)\,\theta(g)=g^{-1}\tau(g)
\, \theta(J)\theta(\tau(g))g=\dot J,
$$
since $\tau(g)\in J$ and $\theta(J)=J$.
Note that $\tau(g^{-1}g^\prime)=g^{-1}\tau(g^\prime)\,
\theta(\tau(g))g\in g^{-1}Jg=\dot J$.
Furthermore, $H\, g\, J\, g^\prime H=
H\, \dot J\, g^{-1}g^\prime\, H$
and $JgH\cap Jg^\prime H=\emptyset$
is equivalent to $\dot J \, H\cap \dot J\, g^{-1}g^\prime H
=\emptyset$. Thus, after replacing $J$ by $\dot J$
and $g$ by $1$, if necessary, it suffices
to prove that $H \, J\, g^\prime H\cap \U=\emptyset$
whenever $g^\prime\notin JH$ and $\tau(g^\prime)\in J$.

Suppose that $h_1$, $h_2\in H$, $k\in J$. Let
$u=h_1k\, g^\prime h_2$. If $u\in \U$,
then $\tau(u)=h_1\tau(kg^\prime)h_1^{-1}\in \tau(\U)
\subset G^{\tu}$. Since $G^{\tu}$ is stable
under conjugation by elements of $G$, we
have $\tau(kg^\prime)\in G^{\tu}$.
Thus there exists a point $x$ in the Bruhat-Tits
building of $G$ and a positive real number
$t$ such that $\tau(kg^{\prime})\in G_{x,t}$, where
$G_{x,t}$ is the open compact subgroup of $G$
attached to the pair $(x,t)$ by Moy and Prasad (\cite{MP}).
Note that $\theta(\tau(kg^{\prime})=\tau(kg^{\prime})^{-1}$
and $\tau(kg^{\prime})=k\tau(g^\prime)\theta(k)^{-1}\in J^\prime$,
where
$J^\prime=J\cap G_{x,t}\cap\theta(G_{x,t})$
Applying Proposition~2.12 of \cite{HM} to the $\theta$-stable
group $J^\prime$, we have
$$
\tau(J^\prime)=\{\, k^\prime\in J^\prime\ | \ 
\tau(k^\prime)=k^{\prime -1}\,\}.
$$
Hence 
$\tau(kg^\prime)=\tau(k_1)$ for some $k_1\in J^\prime$.
This implies that $k_1^{-1}kg^\prime\in H$. As $k_1$, $k\in J$,
we have $g^\prime\in JH$, which is impossible.
Therefore $u\in H\, g^\prime J\, H$ implies $u\notin \U$.
\end{proof}

We remark that neighbourhoods $\U$ as in
the lemma are easy to find. For example,
if $K$ is a compact open subgroup of $G$ such
that $K\subset G^{\tu}$, we could
take $\U=K\cap\theta(K)$.

\begin{proposition}\label{van}
Let $\{\, g_i\ | \ i\in I\,\}$, $I^\prime$, and
$\{\, \sh_\pi^{(\alpha,\beta)}\ | \ \alpha,\, \beta\in I^\prime\,\}$
be as in Theorem~\ref{dimen}(1).
Assume that $\theta(J)=J$ and $\tau(g_i)\in J$ for
all $i\in I$. 
Let $\alpha=(i,j)$, $\beta=(\ell,m)\in I^\prime$.
Choose $\U$ as in Lemma~\ref{trivint}.
Then, if $i\not=\ell$,
$$
\sh_\pi^{(\alpha,\beta)}(g)=0\ \qquad \forall\ g\in \U\cap G^{\tr}.
$$
\end{proposition}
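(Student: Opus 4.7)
The plan is to plug into the integral formula of Theorem~\ref{spch}(4) for $\sh_\pi^{(\alpha,\beta)}(g)$ and exploit the very restrictive support of the matrix coefficient $\varphi^{(\alpha,\beta)}(x)=\langle\,\wtv_\alpha,\pi(x)v_\beta\,\rangle$ that comes from the induced realization of $\pi$; Lemma~\ref{trivint} will then rule out the only double coset where the integrand could be nonzero.

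The first step is to pin down this support. Using $\wtv_\alpha=\wtp(g_i^{-1})\F_{\wtw_j^{(i)*}}$ and $v_\beta=\pi(g_\ell^{-1})\F_{w_m^{(\ell)}}$ together with the invariance of the pairing of $\wtV$ with $V$, rewrite
$$
\varphi^{(\alpha,\beta)}(x)=\langle\,\F_{\wtw_j^{(i)*}},\,\pi(g_ixg_\ell^{-1})\F_{w_m^{(\ell)}}\,\rangle.
$$
Applying the integral representation of the pairing on $\wtV\times V$ and using that $\F_{\wtw_j^{(i)*}}$ is supported on $J$, this pairing collapses to a scalar multiple of $\langle\,\wtw_j^{(i)*},\,\F_{w_m^{(\ell)}}(g_ixg_\ell^{-1})\,\rangle_W$. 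Since $\F_{w_m^{(\ell)}}$ is likewise supported on $J$, the matrix coefficient $\varphi^{(\alpha,\beta)}$ is supported in $g_i^{-1}Jg_\ell$.

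Substituting into Theorem~\ref{spch}(4), the integrand $\varphi^{(\alpha,\beta)}(h_2gkh_1)$ is nonzero only when $h_2gkh_1\in g_i^{-1}Jg_\ell$. Because $k\in K\cap H\subset H$ and $h_1,h_2\in H$, this forces $g\in Hg_i^{-1}Jg_\ell H$. The standing hypotheses $\theta(J)=J$ and $\tau(g_i),\tau(g_\ell)\in J$, combined with $i\neq\ell$ (equivalent to $Jg_iH\neq Jg_\ell H$ because the $g_i$ represent distinct double cosets), are exactly those required to apply Lemma~\ref{trivint} to the pair $(g_i,g_\ell)$; it yields $\U\cap Hg_i^{-1}Jg_\ell H=\emptyset$. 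Hence for every $g\in\U\cap G^\tr$ the integrand vanishes identically, and $\sh_\pi^{(\alpha,\beta)}(g)=0$.

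I do not anticipate a serious obstacle. The only step that demands some care is the support computation for $\varphi^{(\alpha,\beta)}$; this is a direct consequence of the fact that the basis vectors $\F_{w_m^{(\ell)}}$ and $\F_{\wtw_j^{(i)*}}$ are, by construction, supported on $J$, together with the left $J$-equivariance of functions in $V$ and $\wtV$. Once this is in hand, the rest of the argument reduces cleanly to an invocation of Lemma~\ref{trivint}.
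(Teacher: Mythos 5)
Your argument is correct and takes essentially the same route as the paper: compute the support of the matrix coefficient $\varphi^{(\alpha,\beta)}$ from the induced realization (it is supported in $g_i^{-1}Jg_\ell$, forcing the integration variable into $Hg_i^{-1}Jg_\ell H$) and then invoke Lemma~\ref{trivint} with the pair $(g_i,g_\ell)$ to show this set misses $\U$. The only cosmetic difference is that you feed the support observation into the pointwise formula of Theorem~\ref{spch}(4), whereas the paper instead evaluates the distribution $D_{\varphi^{(\alpha,\beta)}}(f)$ for $f$ supported in $\U$ and then reads off vanishing of the function on $\U\cap G^\tr$ via Theorem~\ref{RRF}(2); both reduce to the identical support computation and the identical application of Lemma~\ref{trivint}.
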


\begin{proof} Assume that $i\not=\ell$. It suffices
to show that $\sh_\pi(f)=0$ for all $f\in C_c^\infty(G)$
such that the support of $f$ lies inside $\U$.
Let $f$ be such a function. Since
$\sh_\pi^{(\alpha,\beta)}=D_{\varphi^{(\alpha,\beta)}}$,
we have
$$
\sh_\pi^{(\alpha,\beta)}(f)=\int_{H/H\cap Z}
\int_{H/H\cap Z}\int_G f(g)\,\langle\, w_j^{(i)*},
\, \F_{w_m^{(\ell)}}(g_ih_1gh_2g_\ell^{-1})\,\rangle_W
\,dg\,dh_1^\times dh_2^\times.
$$
According to Lemma~\ref{van}, 
if $g\in \U$ and $h_1$, $h_2\in H$,
then $g_ih_1gh_2g_\ell^{-1}\notin J$.
Because $\F_{w_m^{(\ell)}}$ is supported on $J$
and $f$ is supported in $\U$, this forces
$\sh_\pi^{(\alpha,\beta)}(f)=0$. 
\end{proof}

Next, suppose that $\G$ splits over
a tamely ramified extension of $F$.
Let $\pi$ be one of the irreducible supercuspidal
representations of $G$ constructed in \cite{Y}.
Each such $\pi$ is of the form $\Ind_J^G\kappa$
for some $J$ and $\kappa$.
The results of \cite{HM} give a description of
$\Hom_H(\pi,1)$ for $H=\G^\theta(F)$, subject to 
some hypotheses on quasicharacters of 
the $F$-rational points of certain reductive
$F$-subgroups of $\G$.
The hypotheses are satisfied under mild constraints
on the residual characteristic of $F$, and
they always hold when $G$ is a general linear group.
(The reader may refer to Section~2.6  of \cite{HM}
for a precise statement of the hypotheses.)
The following proposition is a restatement of some parts
of Theorem~5.26 of \cite{HM}. It follows
from the proposition that the vanishing results of
Proposition~\ref{van} hold for the spherical characters
of those supercuspidal representations of \cite{Y}
for which $\dim \Hom_H(\pi,1)$ is greater than one.

\begin{proposition} (\cite{HM}) Let $\pi$ be one of 
the supercuspidal
representations constructed in \cite{Y}. 
Assume that the hypotheses of \cite{HM} are satisfied.
Then
\begin{enumerate}
\item $\Hom_H(\pi,1)$ is finite-dimensional.
\item
If $\Hom_H(\pi,1)$ is nonzero then
$J$ may be chosen so that $\theta(J)=J$
and $\tau(g)\in J$ for all $g\in G$ such that 
$W^{J\cap gHg^{-1}}\not=\{\,0\,\}$.
\end{enumerate}
\end{proposition}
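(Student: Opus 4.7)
The proposition is stated explicitly as a restatement of parts of Theorem~5.26 of \cite{HM}, so the plan is essentially to quote that result; I sketch below how the underlying argument proceeds. Recall that the representations $\pi$ of \cite{Y} are built as $\Ind_J^G\kappa$ from a Yu datum consisting of a tame twisted Levi sequence $G^0\subsetneq G^1\subsetneq\cdots\subsetneq G^d=G$, a point $y$ in the reduced building of $G^0$, a sequence of depths, and a tuple of generic characters; the group $J$ is assembled from the $F$-points of an anisotropic-mod-centre subgroup of $G^0$ together with Moy--Prasad filtration subgroups of the $G^i$ attached to $y$. The Hakim--Mao decomposition (Lemma~\ref{hakmao}) reduces the study of $\Hom_H(\pi,1)$ to an analysis of the spaces $\wtW^{J\cap gHg^{-1}}$ over $J$-$H$ double cosets.

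For (1), one needs both that only finitely many double cosets contribute nonzero summands and that each summand is finite-dimensional. The latter is immediate because $W$ is finite-dimensional. Finiteness of the number of contributing double cosets is the substantive point: nonvanishing of $W^{J\cap gHg^{-1}}$ forces constraints on $g$ relative to the filtration subgroups appearing in the construction of $J$, and under the quasicharacter hypotheses recalled from Section~2.6 of \cite{HM} these constraints carve out only finitely many $J$-$H$ double cosets.

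For (2), the plan is to exploit the flexibility in Yu's construction: conjugating the Yu datum by an element of $G$ yields an equivalent representation realized with a different compact-mod-centre subgroup. When $\Hom_H(\pi,1)\ne 0$, one shows that the point $y$ and the twisted Levi sequence may be chosen so as to be $\theta$-compatible, which produces $\theta(J)=J$. The refinement that $\tau(g)\in J$ for every contributing double coset representative is obtained by a descent along the tower $G^0\subset G^1\subset\cdots\subset G^d$, using at each stage the $\theta$-symmetry of the underlying data to adjust $g$ within its double coset so that the image under $\tau$ lies in the appropriate filtration subgroup. The main obstacle is the simultaneous enforcement of $\theta$-stability of $J$ and the condition $\tau(g)\in J$ across all contributing double cosets; this is the technical heart of \cite{HM}, and relies on a careful recursive analysis interleaving the Moy--Prasad filtration with the action of $\theta$, together with the generic character and quasicharacter hypotheses.
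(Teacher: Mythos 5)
Your proposal correctly identifies that the paper supplies no proof of this proposition; it simply attributes it to Theorem~5.26 of \cite{HM}, and your plan of record ("quote that result") is precisely the paper's own treatment. The additional sketch of the argument inside \cite{HM} (Yu datum, Moy--Prasad filtrations, Hakim--Mao decomposition, descent along the twisted Levi tower) is consistent with the framework the paper invokes, but since \cite{HM} is a cited preprint whose proof is not reproduced here, that sketch cannot be checked against anything in this paper and should be read as plausible context rather than a verified argument.
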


\begin{corollary} If $\dim\Hom_H(\pi,1)>1$,
then there exist spherical characters of
$\pi$ that vanish on the intersection of
$G^\tr$ with some open neighbourhood of
the identity.
\end{corollary}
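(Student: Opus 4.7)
The plan is to combine the preceding proposition (the restatement from \cite{HM}) with Proposition~\ref{van}. First I would invoke the preceding proposition to choose $J$ satisfying $\theta(J)=J$ and $\tau(g)\in J$ for every $g\in G$ with $W^{J\cap gHg^{-1}}\neq\{0\}$. Writing $\{g_i\}_{i\in I}$ for representatives of the contributing double cosets as in Theorem~\ref{dimen}(1), this automatically ensures the hypotheses $\theta(J)=J$ and $\tau(g_i)\in J$ needed for Proposition~\ref{van}.

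Next I would argue that $\dim\Hom_H(\pi,1)>1$ forces $|I|\geq 2$. By the Hakim--Mao decomposition (Lemma~\ref{hakmao}), $\dim\Hom_H(\pi,1)=\sum_{i\in I}n_i$, and under the hypotheses of \cite{HM} the structure of the summands $\wtW^{J\cap g_iHg_i^{-1}}$ for the representations of \cite{Y} produces at least two distinct contributing double cosets whenever the total dimension exceeds one.

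With $i\neq\ell$ in $I$ chosen, set $\alpha=(i,1)$ and $\beta=(\ell,1)$. Pick $\U$ as in the remark following Lemma~\ref{trivint}, for example $\U=K\cap\theta(K)$ with $K$ a small enough compact open subgroup of $G$ contained in $G^{\tu}$. Proposition~\ref{van} then yields $\sh_\pi^{(\alpha,\beta)}(g)=0$ for every $g\in\U\cap G^{\tr}$, while Lemma~\ref{indep} guarantees that $\sh_\pi^{(\alpha,\beta)}$ is a nonzero spherical character of $\pi$, which is the required conclusion.

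The main obstacle is the second step: ruling out the possibility that $\dim\Hom_H(\pi,1)>1$ arises from a single contributing double coset with $n_1>1$, since in that situation Proposition~\ref{van} gives no vanishing information (all $\sh_\pi^{(\alpha,\beta)}$ have $i=\ell$). Handling this requires extracting from \cite{HM} the precise structural description of the summands in the Hakim--Mao decomposition for Yu's supercuspidal representations; assuming that description (e.g.\ that $n_i$ is controlled so that a higher-dimensional summand is accompanied by additional contributing cosets, or that $n_i\le 1$ in the relevant cases), the step reduces to a bookkeeping check and the corollary follows from the argument above.
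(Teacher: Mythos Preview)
Your outline matches the paper's intended argument exactly: the corollary is stated without proof, as an immediate consequence of the preceding proposition (ensuring $\theta(J)=J$ and $\tau(g_i)\in J$) together with Proposition~\ref{van} and the nonvanishing from Lemma~\ref{indep}. There is no alternative route in the paper.

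You have also put your finger on the one genuine issue. Proposition~\ref{van} yields vanishing only for $\sh_\pi^{(\alpha,\beta)}$ with $\alpha=(i,j)$, $\beta=(\ell,m)$ and $i\neq\ell$; hence to produce a \emph{nonzero} spherical character vanishing near the identity one needs at least two contributing double cosets, i.e.\ $|I|\ge 2$. The paper does not argue this point explicitly: it simply passes from $\dim\Hom_H(\pi,1)>1$ to the conclusion. So the gap you flag is present in the paper as well as in your write-up, and its resolution must come from the fine structure in \cite{HM} (specifically, that for Yu's constructions the contributing summands $\wtW^{J\cap g_iHg_i^{-1}}$ satisfy $n_i\le 1$, so that $\sum_i n_i>1$ forces $|I|\ge 2$). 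Your instinct to locate the missing input there is correct; short of quoting that result, the argument is incomplete in both places.
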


\bibliographystyle{amsalpha}

\end{document}